\newtheorem{theorem}{Theorem}[section]
\newtheorem{thm}[theorem]{Theorem}
\newtheorem{prop}[theorem]{Proposition}
\newtheorem{cor}[theorem]{Corollary}
\newtheorem{conj}[theorem]{Conjecture}
\newtheorem{ex}[theorem]{Example}
\newtheorem{defn}[theorem]{Definition}
\newtheorem{quest}[theorem]{Question}
\theoremstyle{definition}
\theoremstyle{remark}
\DeclareMathOperator{\ind}{ind}
\DeclareMathOperator{\scal}{scal}
\DeclareMathOperator{\ch}{{\rm ch}}
\DeclareMathOperator{\U}{U}
\DeclareMathOperator{\Mat}{Mat}
\DeclareMathOperator{\vol}{vol}
\DeclareMathOperator{\Mor}{Mor}
\newcommand{\C}{\mathbb{ C}}
\newcommand{\coker}{{\rm coker}\,} 
\newcommand{\N}{\mathbb{ N}}
\newcommand{\Z}{\mathbb{ Z}}
\newcommand{\Hom}{\operatorname{Hom}}
\newcommand{\KK}{K\!K}
\newcommand{\Q}{\mathbb{ Q}}
\newcommand{\R}{\mathbb{ R}}
\newcommand{\rk}{\operatorname{rk}}
\newcommand{\ra}{{\rightarrow}}
\newcommand{\id}{\operatorname{id}}
\global\let\c@equation=\c@theorem}
\date{\today}
\keywords{Positive scalar curvature, Hilbert module bundle, $K$-area, essentialness}
\subjclass[2000]{Primary 53C23; Secondary 19K35}
\begin{document}

\title{Positive scalar curvature, K-area  and essentialness}

\author{Bernhard Hanke}
\address{Institut f\"ur Mathematik, Universit\"at Augsburg, 86135 Augsburg, Germany}
\email{hanke@math.uni-augsburg.de}

\begin{abstract} The Lichnerowicz formula yields an index theoretic obstruction to positive scalar 
curvature metrics on closed spin manifolds. The most general form of this obstruction is due to 
Rosenberg and takes values in the $K$-theory of the group $C^*$-algebra of the fundamental group of the underlying manifold. 

We give an overview of recent results  clarifying the relation of the Rosenberg index to notions from large 
scale geometry like enlargeability and essentialness.  One central topic is the concept of 
$K$-homology classes of infinite $K$-area. This notion, which in its original 
form is  due to Gromov,  is put in a general context and 
systematically used as a link between geometrically defined large scale 
properties and index theoretic considerations. In particular, we prove essentialness and the non-vanishing of the 
Rosenberg index for manifolds of infinite $K$-area.

\end{abstract}

\maketitle


\section{Introduction and summary}

One of the fundamental problems in Riemannian geometry is to investigate the types of Riemannian metrics that 
exist on a given closed smooth manifold. It turns out that the signs of the associated 
curvature invariants distinguish classes of Riemannian manifolds with considerably
different geometric and topological properties. Usually the class of manifolds admitting metrics with  negative curvature is 
``big'' and the one with positive curvature is ``small''. The general existence theorems for negative 
Ricci curvature metrics \cite{Lohkamp} and negative scalar curvature metrics \cite{Yamabe}, 
the classical theorem of Bonnet-Myers on the finiteness of the fundamental groups of closed Riemannian
manifolds with positive Ricci curvature, Gromov's 
Betti number theorem for closed manifolds of non-negative sectional curvature \cite{Gromov_Curv_Diam_Betti}, 
the recent classification of manifolds with positive curvature  operators \cite{BW} and the proof of the 
differentiable sphere theorem \cite{BS(2008), BS(2009)} are prominent illustrations of this empirical fact.

In this context one may formulate two goals. The first is to 
develop methods to construct Riemannian metrics with 
distinguished properties on general classes of smooth manifolds. Important examples are the powerful tools in the theory of 
geometric partial differential equations, the surgery method due to  Gromov-Lawson \cite{GL(1980)}  and Schoen-Yau \cite{SY(1979)} for the construction of positive scalar curvature metrics, and methods based on geometric flow equations. The second 
deals with the formulation of  (computable) obstructions to the existence of Riemannian metrics with 
specific properties. Often this happens in connection with topological invariants associated 
to the given manifold like homology and homotopy groups and related data. These two goals are usually 
not completely seperate from each other in that they can result in overlapping questions, concepts and methods.
For example the Ricci flow is used to 
produce metrics with special properties, which a posteriori
determine the topological type of the underlying manifold.

Here we shall concentrate on the most elementary curvature invariant associated to a 
Riemannian manifold $(M,g)$, the scalar curvature $\scal_g: M \to \R$. This is usually defined 
by a twofold contraction of the Riemannian curvature tensor of   $(M,g)$, but also has a geometric 
interpretation in terms of the deviation of the volume growth of geodesic balls in $M$ 
compared to  geodesic balls in Euclidean space: 
\[
  \frac{ \vol_{(M^n,g)}(B_p(\epsilon))}{\vol_{(\R^n,g_{eucl})}(B_0(\epsilon))} = 1 -  
   \frac{\scal_g(p)}{6(n+2)}\cdot \epsilon^2  + O(\epsilon^4) \, . 
\]
Given a closed smooth manifold $M$ we shall study whether $M$ admits 
a Riemannian metric $g$ of positive scalar curvature, i.e.~satisfying $\scal_g(p) > 0$ for all $p \in M$. 
In view of the preceding description and the previous remarks it is on the one hand plausible that the resulting 
``inside bending of $M$''  at every point  
might put topological restrictions on $M$. On the other hand the scalar curvature involves an 
averaging process over sectional curvatures of $M$ so that a certain variability 
of the precise geometric shape and the topological properties of $M$ can be expected. 

In connection with  the positive scalar curvature question both aspects, the obstructive and constructive side, 
play  important roles and have
led to a complex body of mathematical insight with connections to index theory, geometric 
analysis, non-commutative geometry, surgery theory, bordism theory and stable homotopy theory.
The paper \cite{Rosenberg(2007)} gives  a comprehensive survey of the subject.  As such 
it represents not only an interesting geometric field of its own, but serves as a unifying link between 
several well established areas in geometry, topology and analysis. 

For metrics of positive scalar curvature there are two important obstructions, whose relation to each other 
is still not completely understood. One is based on the method of minimal hypersurfaces \cite{SY(1979)}
and the other on the analysis 
of the Dirac operator and index theory \cite{Lich(1963)}. 

In some sense the former obstruction is more elementary than the latter as it can be shown by a direct
calculation \cite{SY(1979)} that a nonsingular minimal hypersurface in a positive scalar curvature manifold admits itself 
a metric of positive scalar curvature. In connection with results from 
geometric measure theory that provide nonsingular minimal hypersurfaces representing 
codimension one homology classes in manifolds of dimension at most eight \cite{Smale(1993)}, 
this can inductively be used to exclude the existence of positive scalar curvature metrics 
on tori up to dimension eight, for instance. In higher dimensions the discussion of 
singularities on minimal hypersurfaces representing 
codimension one homology classes is a subtle topic and the subject of recent work 
of Lohkamp \cite{CL(2006), Lo1, Lo3}. This theme, which has important connections to the positive mass theorem in general 
relativity, will not be pursued further in our paper.

The second, index  theoretic, obstruction is both more restrictive as it requires a spin structure on the underlying manifold 
(or at least its universal cover), and less elementary as it is based on the Atiyah-Singer index theorem.  
In its most basic form it says that closed spin manifolds with non-vanishing $\hat{A}$-genus do 
not admit metrics of positive scalar curvature, the $\hat{A}$-genus being an integer (in the spin case) which depends on 
the rational Pontrjagin classes of the underlying manifold and its orientation class and hence only
on its oriented homeomorphism type. 

This obstruction was refined by Hitchin \cite{Hitchin(1974)} and  
Rosenberg \cite{Rosenberg(1983)} and in its most general form takes values 
in $KO_*(C^*_{\R, max} \pi_1(M))$, the $K$-theory of the real maximal group $C^*$-algebra of the fundamental group of the underlying manifold. 
It therefore touches important questions in noncommutative geometry linked to the Baum-Connes
and Novikov conjectures. The Gromov-Lawson-Rosenberg 
conjecture predicts that  for closed spin manifolds of dimension at least five the vanishing of this index obstruction is 
not only necessary, but also sufficient for 
the existence of a positive scalar curvature metric. Despite the fact that
this conjecture is wrong in general \cite{Schick(1998)}, 
the index obstruction being surpassed by the minimal hypersurface obstruction in some cases, it is 
remarkable that it holds 
for simply connected manifolds \cite{Stolz(1992)} and - in a stable 
sense - for all spin manifolds for which the assembly map 
with values in the $K$-theory of the real group $C^*$-algebra of the fundamental group is 
injective \cite{Stolz(2001)}, see Theorem \ref{Stolz} below. 
It is up to date unknown whether this conjecture in its original, unstable, form is true for spin manifolds 
with finite fundamental groups, although in this case the injectivity of the assembly map is known.
The index theoretic obstruction to positive scalar 
curvature will be recalled in Section \ref{index} of our paper. 

Gromov and Lawson used the index of the usual Dirac operator on closed spin manifolds twisted with bundles of small curvature to prove that some manifolds with  
vanishing $\hat{A}$-genus do still not admit positive scalar curvature metrics. For this aim they introduced
several kinds of largeness properties  for Riemannian manifolds, the most important ones being the notion of 
enlargeability \cite{GL(1980b), GL(1983)} and infinite $K$-area \cite{Gromov(1995)}. These properties have an asymptotic character in that they require, for each $\epsilon > 0$, the existence 
of a certain geometric structure attached to the underlying manifold which is $\epsilon$-small 
in an appropriate sense.  Precise definitions will be given in Section \ref{index} below. 

In light of the common index theoretic origin of these obstructions 
it is reasonable to expect that they are related to the Rosenberg index. In the papers 
\cite{HKRS(2007),HS(2006),HS(2007)} it is proved that the Rosenberg obstruction indeed subsumes the 
enlargeability obstruction in the sense that the former is non-zero for enlargeable spin manifolds. Moreover, it 
was shown in the cited papers that enlargeable manifolds are {\em essential}, i.e.~the classifying maps  of 
their universal covers map the homological fundamental classes to non-zero classes in the homology of the 
fundamental groups. This notion was introduced by Gromov in  \cite{Gromov(1983)} in connection with 
the systolic inequality giving an upper bound of the length of the shortest noncontractible 
loop in a Riemannian manifold $M$ in terms of the volume of $M$. In particular it follows from these 
results that enlargeable manifolds 
obey Gromov's systolic inequality. 

The methods introduced in \cite{HS(2006), HS(2007)} were applied in \cite{HS(2008)} 
to prove some cases of the strong Novikov conjecture. This is implied by the Baum-Connes conjecture  
and predicts that for discrete groups $G$ the rational assembly map 
\[
    K_*(BG) \otimes \Q \to K_*(C^*_{max} G) \otimes \Q 
\]
is injective. In loc.~cit.~it is shown that this map is indeed non-zero on all classes in $K_*(BG)\otimes \Q$ which
 are detected by classes in the subring generated by $H^{\leq 2}(BG; \Q)$. 
As a corollary higher signatures associated to elements in this 
subring of  $H^*(BG; \Q)$   are oriented homotopy invariants, a fact which had been proven first by 
Mathai \cite{Mathai}.

It turns out that the methods of \cite{HS(2006), HS(2008)} fit very nicely the concept of $K$-area introduced by 
Gromov in \cite{Gromov(1995)}. It is one purpose of the paper at hand to elaborate on this connection.  
Our main result, Theorem \ref{Kess}, states that $K$-homology classes of {\em infinite $K$-area} 
in closed manifolds $M$ map nontrivially to  $K_*(C^*_{max} \pi_1(M))$ 
under the assembly map. Generalizing the original concept of Gromov we call a $K$-homology class  
of {\em infinite $K$-area}, 
if it can be detected by bundles of finitely generated Hilbert $A$-modules equipped with holonomy representations 
which are arbitrarily close to the identity, where $A$ is some $C^*$-algebra with unit. Precise 
definitions are given in Section \ref{Karea} below, see in particular Definition \ref{Kflach}.  

From Theorem \ref{Kess} the main results of the papers \cite{HKRS(2007), HS(2006), HS(2007), HS(2008)}
follow quite directly. 
Apart from this we will demonstrate that closed spin manifolds whose $K$-theoretic fundamental classes are
 of infinite $K$-area have non-vanishing 
Rosenberg index (Corollary \ref{cor1}) and oriented manifolds with fundamental classes of infinite $K$-area are essential 
(Theorem \ref{thm2}). The first result solves a problem stated in the introduction of \cite{Listing(2010)}. 

In \cite{BrunnHan} essentialness is discussed from  a purely homological point of view. Among other things 
it is proved that the property of being enlargeable depends only on the image of the 
homological fundamental class of the underlying manifold 
in the rational homology of its fundamental group. This flexible formulation allows the construction of 
manifolds which are essential, but not enlargeable. We will briefly review these results in Section \ref{large_homology}. 
We do not know whether a proof of Theorem \ref{thm2} is feasible which avoids the ``infinite product construction'' laid 
out in  Section \ref{Karea}. Also, we do not know an essential manifold whose
fundamental class is not of infinite $K$-area, see Question \ref{final_problem}. 

This paper is intended on the one hand as a report on recent results pertaining to the positive scalar curvature question in the light of methods from index theory, $K$-theory and asymptotic geometry as obtained by the author and his coauthors.  
On the other hand it is meant to establish the point of view that the 
notion of infinite $K$-area may serve as a unifying principle for these results, which sometimes 
allows short and conceptual proofs.  

I am grateful to the DFG Schwerpunkt ``Globale Differentialgeometrie'' for financial 
support during the last years. Special thanks go to Thomas Schick for a very fruitful and pleasant collaboration.
Most of the material in these notes is based on ideas developped during this collaboration. 

Daniel Pape carefully read the first version of this manuscript and helped to improve the presentation with 
many useful comments.

\section{Index obstruction to positive scalar curvature}  \label{index}

The Gau\ss -Bonnet formula implies that closed surfaces with nonpositive Euler characteristic 
do not admit positive scalar curvature metrics. These comprise all closed surfaces apart from the 
two sphere and the real projective plane. The mechanism 
behind this obstruction is the fact that a topological invariant, the Euler characteristic, may be expressed 
as an integral over a curvature quantity, the Gau\ss~curvature. 

In higher dimensions obstructions to positive scalar curvature metrics can be obtained in a more indirect 
way by use of the Atiyah-Singer index theorem. Let $M$ be a closed smooth oriented manifold of dimension divisible 
by four. The $\hat{A}$-genus $\hat{A}(M)\in \Q$ of $M$ is obtained by evaluating the $\hat{A}$-polynomial
\[
   \mathcal{\hat{A}}(M) =  1 - \frac{p_1(M)}{24} + \frac{ - 4 p_2(M)  + 7p_1^2(M) }{2^7 \cdot 3^2 \cdot 5} 
   + \ldots 
\]
in the Pontrjagin classes of $M$ on  the fundamental class of $M$. This is an invariant of the oriented 
homeomorphism type of $M$ 
by the topological invariance of rational Pontrjagin classes. It is 
an integer,  if $M$ is equipped with a spin structure. This is implied by  the fact that in 
this case the Atiyah-Singer index theorem provides an equation
\[
   \hat{A}(M) = \ind(D_g^+) = \dim_{\C} (\ker D_g^+) - \dim_{\C} ( \coker D_g^+ )
\]
where 
\[
   D_g^{\pm}  : \Gamma(S^{\pm} ) \to \Gamma(S^{\mp} ) 
\]
is the Dirac operator on the complex spinor bundle $S = S^+ \oplus S^- \to M$ of $(M,g)$. Here $g$ is 
an arbitrary Riemannian metric on $M$. Due to the appearance of $g$ 
in the definition of $D_g^+$, the Atiyah-Singer index theorem
relates topological to geometric properties  of $M$.
Detailed information on the definition of $D_g^+$ and 
spin geometry in general can be found in \cite{LawsonMichelsohn}. 

The Bochner-Lichnerowicz-Weitzenb\"ock formula \cite{Lich(1963)}  
\[
    D_g^- \circ D_g^+ = \nabla^* \nabla + \frac{\scal_g}{4}
\]
implies that if $\scal_g (M) > 0$, then the Dirac operator $D^+_g$ is invertible and hence $\ind(D_g^+) = 0$. 
From this we obtain the following fundamental result, see \cite[Theor\`eme 2]{Lich(1963)}.  

\begin{thm} Let $M$ be a closed spin manifold with 
$\hat{A}(M) \neq 0$. Then $M$ does not admit a metric of positive scalar curvature. 
\end{thm}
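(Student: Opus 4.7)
The plan is to argue by contradiction: suppose $M$ carries a Riemannian metric $g$ with $\scal_g > 0$ everywhere, and show that this forces $\hat{A}(M) = 0$, contradicting the hypothesis. The two ingredients have already been set up in the discussion preceding the statement: the Bochner-Lichnerowicz-Weitzenb\"ock identity
\[
  D_g^- \circ D_g^+ = \nabla^* \nabla + \tfrac{\scal_g}{4},
\]
which relates the square of the Dirac operator to a nonnegative operator plus a curvature term, and the Atiyah-Singer index theorem, which identifies $\ind(D_g^+)$ with the purely topological quantity $\hat{A}(M)$.

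First I would compute the kernel of $D_g^+$. For a smooth section $\psi \in \Gamma(S^+)$ with $D_g^+ \psi = 0$, pair $D_g^- D_g^+ \psi$ with $\psi$ in $L^2(M, S^+)$ using the Weitzenb\"ock formula to obtain
\[
  0 = \innerprod{D_g^- D_g^+ \psi, \psi} = \norm{\nabla \psi}^2_{L^2} + \int_M \tfrac{\scal_g}{4} |\psi|^2 \, dv.
\]
Since $M$ is closed, $\scal_g$ attains a positive minimum; hence both summands on the right are nonnegative and the second is strictly positive unless $\psi \equiv 0$. This forces $\ker D_g^+ = 0$. The same reasoning applied to $D_g^-$ (which satisfies the analogous Weitzenb\"ock identity and equals the formal adjoint of $D_g^+$) yields $\coker D_g^+ = \ker D_g^- = 0$.

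Combining these vanishings gives $\ind(D_g^+) = \dim_{\C}(\ker D_g^+) - \dim_{\C}(\coker D_g^+) = 0$. By the Atiyah-Singer index theorem, recalled in the excerpt as $\hat{A}(M) = \ind(D_g^+)$, we conclude $\hat{A}(M) = 0$, contradicting the assumption. Hence no metric of positive scalar curvature can exist on $M$.

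The only step with any subtlety is the Bochner vanishing argument, and even here the hard work has been done by the Weitzenb\"ock identity: once one has it, the proof reduces to a single integration by parts combined with compactness of $M$ to extract a uniform lower bound on $\scal_g$. All other steps are invocations of already-stated results (the index theorem and the Weitzenb\"ock formula), so no further calculation is required.
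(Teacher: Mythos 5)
Your proof is correct and follows essentially the same route as the paper: the Bochner-Lichnerowicz-Weitzenb\"ock formula forces $\ker D_g^+ = \ker D_g^- = 0$ when $\scal_g > 0$, so $\ind(D_g^+) = 0$, contradicting $\hat{A}(M) = \ind(D_g^+) \neq 0$ from the Atiyah-Singer index theorem. The vanishing argument via the $L^2$-pairing and the positive lower bound on $\scal_g$ on the closed manifold is exactly the standard Lichnerowicz argument the paper invokes.
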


However, the vanishing of this obstruction is not sufficient for the existence of positive scalar curvature metrics. For example, 
the $\hat{A}$-genus of the $4k$-dimensional torus 
$T^{4k}$ vanishes for all $k > 0$, because these manifolds are parallelizable.

The index theoretic approach explained above can be refined by 
considering the twisted Dirac operator 
\[
   D_{g,E}^+ : \Gamma(S^+ \otimes E) \to \Gamma(S^- \otimes E) 
\]
where $E \to M$ is some finite dimensional Hermitian vector bundle equipped with a Hermitian 
connection, cf. \cite[Prop. II.5.10]{LawsonMichelsohn}. 
The Atiyah-Singer index theorem computes the index of this operator as
\[
   \ind(D_{g,E}^+) = \langle \mathcal{\hat{A}}(M)  \cup \ch(E) , [M] \rangle \in \Z \, . 
\]
Due to the appearance of the Chern character $\ch(E) \in H^{ev}(M;\Q)$
 this number can be non-zero even though $\hat{A}(M)$ vanishes. Unfortunately, the nonvanishing of $\ind(D_{g,E}^+)$ does not obstruct positive 
scalar curvature metrics on $M$ as the following example shows. 

\begin{ex} \label{badex} Let $M^n = S^{4k+2}$. Because the Chern character defines 
an isomorphism 
\[
  \ch :  K^0(M) \otimes \Q \cong H^{ev}(M;\Q) \, , 
\]
there is a finite dimensional Hermitian bundle $E \to M$ with $\ch_{2k+1}(E) \neq 0 \in H^n(M;\Q)$. Hence, for 
any connection on $E$ and any choice of Riemannian metric $g$ on $M$, we get 
$\ind(D_{g,E}^+) \neq 0$ although $M$ admits a metric of positive scalar curvature.  
\end{ex}  

This is due to the fact that now the Bochner-Lichnerowicz-Weitzenb\"ock formula 
\[
    D_{g,E}^- \circ D_{g,E}^+ = \nabla^* \nabla + \frac{\scal_g}{4} + R^E 
\]
contains an additional operator $R^E : \Gamma(S^{\pm} \otimes E) \to \Gamma(S^{\pm} \otimes E)$ 
of order $0$ which depends on the curvature 
of the bundle $E$, cf. \cite[Theorem 8.17]{LawsonMichelsohn}, so that 
even in the case when $\scal_g > 0$, the operator $D^+_{g,E}$ may not be invertible. 

Gromov and Lawson observed in \cite{GL(1980b)} that this method 
does still lead to an effective obstruction to positive scalar curvature metrics on $M$ in case that for all $\epsilon$ 
there is a twisting bundle $E \to M$ which satisfies $\| R^E \| < \epsilon$ and 
whose Chern character contributes nontrivially to $\ind(D^+_{g,E})$. 
If in this case $M$ carried a metric $g$ satisfying $\scal_g > 0$ we would find a twisting bundle $E$ with 
\[
   \| R^E\| < \frac{\min_{p \in M} | \scal_g(p)|}{4}
\]
and the Bochner-Lichnerowicz-Weitzenb\"ock  formula would then imply that $\ind D^+_{g,E}=0$, a contradiction. 

For example this reasoning can be used to show 
that the tori $T^n$ do not admit metrics of positive scalar curvature \cite{GL(1980b)}.

A general class of manifolds where twisting bundles with the described property can be found 
are {\em enlargeable} manifolds, which were introduced in loc.~cit., and 
manifolds of infinite $K$-area in the sense of \cite{Gromov(1995)}. 
We will discuss these notions and put them in a general context in Section \ref{Karea}. 

The index theoretic point of view was refined by Rosenberg \cite{Rosenberg(1983), Rosenberg(1986)}. 
For any discrete group $G$ the group $C^*$-algebra $C^*G$ is constructed by completing 
 the group algebra $\C[G]$ 
with respect to some pre-$C^*$-norm coming 
from unitary representations of $G$ on a Hilbert space and taking the induced embedding of $\C[G]$ 
into the bounded operators on this Hilbert space. More specifically, if one starts with the regular representation of $G$ on the 
space of square summable functions $l^2(G)$ this leads to the {\em reduced group $C^*$-algebra} $C^*_{red} G$ 
and taking all unitary representations of $G$ into account one arrives at the {\em maximal group $C^*$-algebra} $C^*_{max} G$.
For more details we refer to \cite{Black, HR, Wegge-Olsen}. 
These $C^*$-algebras and their $K$-theories are in general different \cite[Exercise 12.7.7]{HR}, but the following 
construction 
works for both variants, and this is why we drop the subscript  from our notation.  Note that the left 
translation action of $G$ on 
$\C[G]$ induces a  left $G$-action on $C^* G$. 

Let $M$ be a closed spin manifold of even dimension. 
The Mishchenko-Fomenko bundle $E \to M$ is defined as 
\[
   E = \widetilde{M} \times_{\pi_1(M)} C^* \pi_1(M) \, . 
\]
It is a locally trivial bundle of free right Hilbert $C^* \pi_1(M)$-modules of rank one in the sense 
of \cite{Schick(2005), Wegge-Olsen}.  The fibrewise inner product is induced by the canonical 
inner product 
\begin{eqnarray*} 
    C^* \pi_1(M) \times C^* \pi_1(M) & \to & C^* \pi_1(M) \\
         (x,y)                 &    \mapsto & x^* \cdot y \, . 
\end{eqnarray*}
By construction the bundle $E \to $M can be equipped with a flat connection. Depending on the choice of a metric $g$  on $M$ we obtain a twisted  
Dirac operator 
\[
   D^+_{g,E} : \Gamma(S^+ \otimes E ) \to \Gamma(S^- \otimes E) 
\]
with an index 
\[
 \alpha(M) := \ind(D^+_{g,E}) =  \ker (D^+_{g,E})  - \coker (D^+_{g,E}) \in K_0(C^* \pi_1(M)) \, .
\]
The group $K_0(C^* \pi_1(M))$ consists of formal differences of finitely generated projective 
$C^* \pi_1(M)$-modules, cf.~\cite{Black}. For the infinite dimensional 
twisting bundle $E$ the modules $\ker(D^+_{g,E})$ and $\coker(D^+_{g,E})$ are not in this class 
in general, but this holds  after a $C^* \pi_1(M)$-compact perturbation
of $D^+_{g,E}$ which makes this operator a $C^* \pi_1(M)$-Fredholm operator. 
For precise formulations and 
more details on the involved theory we refer the reader to \cite{MF(1979)}, 
in particular to  Theorem 3.4.~in loc.~cit.

It follows again from the Bochner-Lichnerowicz-Weitzenb\"ock  formula (which does not contain 
a curvature term $R^E$ as $E$ is flat) that the index 
$\alpha(M) \in K_0(C^* \pi_1(M)) $ vanishes, if $\scal_g > 0$. Moreover, the Mishchenko-Fomenko 
index theorem \cite{MF(1979)} implies that - similar to the invariant $\hat{A}(M)$ - 
the obstruction $\alpha(M)$ does  not depend on the choice of a Riemannian metric on $M$, 
but only on the oriented homeomorphism type of $M$.

There is an alternative construction of $\alpha(M)$ based on analytic $K$-homology \cite{Black, HR}.
As before let $M$ be a closed spin manifold. We do no longer assume that $n := \dim M$ is even
(this only simplified the above considerations). 

In this setting $\alpha(M)$ is defined as the image of the $K$-theoretic fundamental class $[M]_K \in K_n(M)$ which 
is induced by the given spin structure under the composition 
\[
    K_n(M) = K_n^{\pi_1(M)}( \widetilde{M}) 
    \to K_n^{\pi_1(M)}( \underline{E} \pi_1(M)) \stackrel{\mu}{\ra} K_n(C^* \pi_1(M)) \, . 
\]
Here the first map is induced by the $\pi_1(M)$-equivariant 
classifying map $\widetilde{M} \to \underline{E} \pi_1(M)$ from the universal cover of $M$ to the universal 
contractible $\pi_1(M)$-space with finite isotropy groups and the second map is the Baum-Connes assembly map, 
cf. \cite{Black}.  

There is a real analogue $\alpha_{\R}(M)$ of the index obstruction $\alpha(M)$ which, 
for simply connected manifolds, was introduced
in the paper \cite{Hitchin(1974)} and is defined as the image of the $KO$-theoretic fundamental class 
$[M]_{KO} \in KO_n(M)$ under the composition
\[
    KO_n(M) = KO^{\pi_1(M)}_n( \widetilde{M}) 
    \to KO^{\pi_1(M)}_n( \underline{E} \pi_1(M)) \stackrel{\mu}{\ra} KO_n(C^* \pi_1(M)) \, . 
\]
The invariant $\alpha_{\R}(M)$ is more sensitive to differential topological 
properties of $M$ than $\alpha(M)$. 
For example it is different from zero on some exotic spheres \cite{Hitchin(1974)}. A 
refined variant of the Bochner-Lichnerowicz-Weitzenb\"ock argument shows that 
$\alpha_{\R}(M) = 0$, if $M$ admits a metric of positive scalar curvature.

In case we are dealing with the reduced group $C^*$-algebra $C^*_{red} \pi_1(M)$, the vanishing of the $\alpha$-obstruction 
is closely linked to properties of the Baum-Connes  assembly map 
\[
   \mu_{\R} :  KO^G_*( \underline{E}  G) \to KO_*(C^*_{red} G) 
\]
and its complex analogue
\[
   \mu_{\C}  : K^G_*( \underline{E}  G) \to K_*(C^*_{red} G) \, . 
\]
According to the Baum-Connes conjecture \cite{Black}, a central open problem in noncommutative geometry, these 
two maps are isomorphisms for all discrete groups $G$. 

The following conjecture has played a prominent role in the subject. It expresses the 
expectation that the Rosenberg obstruction is in some sense optimal.

\begin{conj}[Gromov-Lawson-Rosenberg conjecture] Let $M$ be a closed 
spin manifold of dimension at least five and with $\alpha_{\R}(M) = 0$. Then $M$ admits a 
metric of positive scalar curvature.
\end{conj}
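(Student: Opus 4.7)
The plan is to pursue the standard two-step surgery-and-bordism strategy that succeeded in the simply connected case of Stolz and to attempt to push it to arbitrary fundamental groups. First, I would invoke the surgery theorem of Gromov--Lawson and Schoen--Yau: in dimension $n \geq 5$, the property of admitting a positive scalar curvature metric is preserved under spin-bordisms realised by surgeries of codimension at least three, and any two closed spin $n$-manifolds $(M,f),(M',f')$ with classifying maps to $B\pi_1(M)$ that are spin-bordant over $B\pi_1(M)$ are related by such surgeries (possibly after connected sum with simply connected pieces). This reduces the conjecture to a purely bordism-theoretic assertion: if $\alpha_{\R}(M)=0$ and $f:M\to B\pi_1(M)$ is the classifying map, then the class $[M,f]\in \Omega_n^{\rm spin}(B\pi_1(M))$ must admit a spin representative carrying a positive scalar curvature metric.

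Second, I would organise this via the ``positive scalar curvature'' bordism subgroup $\Omega_n^{\rm spin,+}(B\pi)\subseteq \Omega_n^{\rm spin}(B\pi)$ of classes realised by manifolds with such a metric; the Bochner--Lichnerowicz--Weitzenb\"ock formula forces $\alpha_{\R}$ to vanish on this subgroup, yielding an induced map
\[
   \Omega_n^{\rm spin}(B\pi) \big/ \Omega_n^{\rm spin,+}(B\pi) \longrightarrow KO_n(C^*_{red}\pi).
\]
The conjecture for $\pi$ and $n\geq 5$ is exactly injectivity of this map. For $\pi=1$ this is Stolz's theorem \cite{Stolz(1992)}, proved by computing $\Omega^{\rm spin}_*$ modulo positive scalar curvature and matching it with $KO_*$ through Bott stabilisation. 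My strategy is to bootstrap from this case.

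Third, I would try a skeletal induction over $B\pi$. Given $(M,f)$ with $\alpha_{\R}(M)=0$, write $f_*[M]_{KO}\in KO_n(B\pi)$ and filter by the Atiyah--Hirzebruch spectral sequence; inductively represent each filtration quotient by total spaces of fibre bundles with simply connected positive scalar curvature spin fibres (supplied by Stolz) over lower-dimensional cycles in $B\pi$. The fibrewise Kazdan--Warner--Gromov--Lawson twisting construction then installs a positive scalar curvature metric on the total space. Compatibility with the analytic assembly map $\mu_{\R}:KO_*(B\pi)\to KO_*(C^*_{red}\pi)$, together with the simply connected case, would translate $\alpha_{\R}$-vanishing into the existence of a positive scalar curvature representative up to spin bordism over $B\pi$, so that the surgery step of the first paragraph would close the argument.

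The main obstacle will be this last step for fundamental groups containing torsion, since the bordism quotient $\Omega_n^{\rm spin}/\Omega_n^{\rm spin,+}$ carries subtle torsion contributions coming from finite subgroups of $\pi$ that the assembly map may collapse. Realising $\alpha_{\R}$-trivial torsion bordism classes by fibre-bundle constructions hinges on delicate equivariant positive scalar curvature questions on spherical space forms and on how faithfully the representation theory of finite subgroups of $\pi$ is reflected in $KO_*(C^*_{red}\pi)$; no uniform geometric construction is currently known. A complete proof would therefore require either a strengthened surgery/bundle result that realises every $\alpha_{\R}$-trivial class as a fibration of simply connected positive scalar curvature manifolds, or an extra geometric input tailored to the torsion subgroups of $\pi$, perhaps via equivariant metrics of positive scalar curvature on representation spheres of isotropy groups. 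I would expect this torsion obstruction, rather than the surgery or assembly step, to be where the real work lies.
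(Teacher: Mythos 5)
The statement you are trying to prove is labelled as a \emph{conjecture} in the paper, and the paper offers no proof of it; on the contrary, it explicitly records that the Gromov--Lawson--Rosenberg conjecture is \emph{false} in general. Schick \cite{Schick(1998)} constructed a counterexample: a closed spin $5$-manifold with fundamental group $\Z^4\times\Z/3$ and vanishing Rosenberg index $\alpha_{\R}(M)$ that nevertheless admits no metric of positive scalar curvature, the obstruction being detected by the Schoen--Yau minimal hypersurface method rather than by any index. So no strategy, however it treats the surgery and bordism steps, can close the argument: the map
\[
   \Omega_n^{\rm spin}(B\pi)\big/\Omega_n^{\rm spin,+}(B\pi)\longrightarrow KO_n(C^*_{red}\pi)
\]
whose injectivity you correctly identify as equivalent to the conjecture is genuinely not injective for this $\pi$.

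Your diagnosis of where the difficulty lies is partly on target but ultimately misplaced. You predict the obstacle sits in realising $\alpha_{\R}$-trivial torsion bordism classes geometrically; in fact Schick's example shows the failure is not a gap in known constructions but a second, independent obstruction (minimal hypersurfaces) that the Dirac operator simply does not see. The correct salvageable statements, which the paper does record, are: (i) the simply connected case \cite{Stolz(1992)}; and (ii) the \emph{stable} version (Theorem~\ref{Stolz}): if the real assembly map $\mu_{\R}$ is injective for $\pi_1(M)$ and $\alpha_{\R}(M)=0$, then $M\times B^8\times\cdots\times B^8$ admits positive scalar curvature for a Bott manifold $B^8$. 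Your skeletal-induction and fibre-bundle programme is essentially the architecture of Stolz's proof of the stable theorem, so it is a reasonable route to (ii) --- but the unstable conjecture as stated cannot be proved, and even its restriction to finite fundamental groups (where $\mu_{\R}$ is known to be injective) remains open rather than established.
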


This is true, if $M$ is simply connected \cite{Stolz(1992)}, but wrong in general \cite{Schick(1998)}. 
In dimensions two and three analogues of the Gromov-Lawson-Rosenberg conjecture 
are true \cite{MOP}, but in dimension four there are additional obstructions coming 
from Seiberg-Witten theory. However, the following 
stable version of the conjecture conditionally holds in the following sense.

\begin{thm}[\cite{Stolz(2001)}] \label{Stolz} Assume that the real Baum-Connes assembly map $\mu_{\R}$
is injective 
for $\pi_1(M)$  and that $\alpha_{\R}(M) = 0$. Then some 
manifold of the form $M \times B^8 \times \ldots \times B^8$ admits 
a metric of positive scalar curvature, where $B^8$ is an arbitrary eight dimensional closed spin manifold with 
$\hat{A}(M) = 1$. 
\end{thm}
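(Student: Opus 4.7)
The plan is to reduce the theorem to Stolz's fundamental result on when a spin bordism class contains a manifold with positive scalar curvature, using the factorization of the Rosenberg index through connective $KO$-homology together with Bott periodicity.

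First I would fix a classifying map $f \colon M \to B\pi_1(M)$ and regard the pair $(M,f)$ as an element of the spin bordism group $\Omega^{\Spi}_n(B\pi_1(M))$. The Rosenberg index $\alpha_\R(M)$ is the image of $[M,f]$ under the composition
\[
  \Omega^{\Spi}_n(B\pi_1(M)) \xrightarrow{D} ko_n(B\pi_1(M)) \xrightarrow{\mathrm{per}} KO_n(B\pi_1(M)) \xrightarrow{\mu_\R} KO_n(C^*_{red}\pi_1(M)),
\]
where $D$ is the Atiyah orientation sending a spin bordism class to its $KO$-homological fundamental class, and $\mathrm{per}$ is the periodicization map from connective to periodic real $K$-homology. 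Since $\mu_\R$ is assumed injective and $\alpha_\R(M) = 0$, the periodic class $\mathrm{per}(D[M,f]) \in KO_n(B\pi_1(M))$ vanishes.

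Next, I would invoke the fact that the natural transformation $\mathrm{per}\colon ko_* \to KO_*$ becomes an isomorphism after inverting the Bott element $\beta \in ko_8 = \Z$. Concretely, the Bott manifold $B^8$ is an eight-dimensional closed spin manifold with $\hat A(B^8) = 1$, and multiplication by $[B^8] \in \Omega^{\Spi}_8(\pt)$ corresponds under $D$ to multiplication by $\beta$. Since $\mathrm{per}(D[M,f]) = 0$ and $KO_{n+8k}(B\pi_1(M)) = ko_{n+8k}(B\pi_1(M))[\beta^{-1}]$ in the relevant range, there exists $k \gg 0$ such that
\[
  D\bigl[M \times B^8 \times \cdots \times B^8,\, f \circ \mathrm{pr}_M\bigr] \;=\; \beta^k \cdot D[M,f] \;=\; 0
\]
in $ko_{n+8k}(B\pi_1(M))$.

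The final and hardest step is to promote this vanishing in connective $ko$-homology to an actual positive scalar curvature metric on $M \times B^8 \times \cdots \times B^8$. Here I would appeal to the deep theorem of Stolz (generalizing his simply connected result \cite{Stolz(1992)} via equivariant surgery in the spirit of Gromov-Lawson) which identifies the kernel of $D\colon \Omega^{\Spi}_*(B\pi) \to ko_*(B\pi)$ (in dimensions $\geq 5$) with the image of the positive-scalar-curvature spin bordism group $\Omega^{\Spi,+}_*(B\pi)$. Thus the stabilized bordism class is represented by a closed spin manifold $N$ admitting a psc metric; a further application of the Gromov-Lawson-Schoen-Yau surgery theorem for psc metrics, performed compatibly with the map to $B\pi_1(M)$, transfers the psc metric from $N$ to $M \times B^8 \times \cdots \times B^8$ itself. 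This last transfer via surgery is the main technical obstacle, as it requires controlling the reference map to $B\pi_1(M)$ throughout the surgery procedure; everything preceding it is formal consequence of the factorization of $\alpha_\R$ and Bott periodicity.
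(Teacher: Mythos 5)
Your proposal is correct and follows essentially the argument of the cited source: the paper itself gives no proof of Theorem \ref{Stolz} but quotes it from Stolz, whose proof is exactly your factorization of $\alpha_{\R}$ through $ko_*(B\pi_1(M))$, the identification of $KO_*(B\pi_1(M))$ with $ko_*(B\pi_1(M))$ after inverting the Bott class (realized geometrically by multiplication with the Bott manifold $B^8$), and the Jung--Stolz surgery/bordism theorem in dimensions at least five. The only imprecision is that Stolz's $\mathbb{H}P^2$-bundle theorem yields the containment of $\ker\bigl(D\colon \Omega^{\Spi}_*(B\pi)\to ko_*(B\pi)\bigr)$ in the set of bordism classes representable by positive scalar curvature manifolds, not an identification with the image of $\Omega^{\Spi,+}_*(B\pi)$, but that containment is the only direction your argument uses.
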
 

This result is remarkable, because it is not understood how it can happen that a manifold $N$ does not 
admit a positive scalar curvature metric, but $N \times B^8$ does. Notice that the vanishing or 
non-vanishing of $\alpha_{\R}(M)$ is not affected, when $M$ is  multiplied with copies of $B^8$. In this 
respect Theorem \ref{Stolz} establishes $\alpha_{\R}(M)$ as the universal stable index theoretic obstruction 
to positive scalar curvature metrics.

If the assembly map for the maximal complex group $C^*$-algebra is injective, then also the rational assembly map 
\[
   K^G_*(\underline{E} G ) \otimes \Q = K_*(BG) \otimes \Q \to K_*(C^*_{max} G) \otimes \Q
\]
is injective. The strong Novikov conjecture \cite{Black} states that  here injectivity holds for all discrete groups $G$.

 Therefore it makes sense to single out those manifolds $M$ whose fundamental classes 
map nontrivially to $K_*(B \pi_1(M)) \otimes \Q$. This motivates the next definition.  

\begin{defn}  \label{Kessential}ÊA closed spin${}^c$ manifold $M^n$ is called {\em (rationally) $K$-theoretic essential}, if 
the classifying map $\phi : M \to B\pi_1(M)$ for the universal cover of $M$ satisfies 
\[
   \phi_*([M]_K) \neq 0 \in K_n(B\pi_1(M) ) \otimes \Q \, ,  
\]
where $[M]_K \in K_n(M)$ is  the $K$-theoretic fundamental class of $M$. 
\end{defn}

\begin{conj} \label{conjess} A $K$-theoretic essential spin manifold does not admit a metric of positive scalar curvature. 
\end{conj}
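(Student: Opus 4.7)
The plan is to chain together the three ingredients that have already appeared in the excerpt: the Rosenberg obstruction, the factorisation of this obstruction through the assembly map, and an appropriate injectivity statement of Novikov type. Suppose towards contradiction that $M$ carries a metric $g$ with $\scal_g > 0$. Then the Bochner--Lichnerowicz argument applied to the Dirac operator twisted by the Mishchenko--Fomenko bundle gives $\alpha_{\R}(M) = 0$, and complexification forces the vanishing of the complex Rosenberg index $\alpha(M) \in K_n(C^*_{max}\pi_1(M))$. By construction $\alpha(M)$ factors as
\[
   \alpha(M) = \mu\bigl( \phi_*([M]_K) \bigr),
\]
where $\mu : K_n(B\pi_1(M)) \to K_n(C^*_{max}\pi_1(M))$ is the maximal complex Baum--Connes assembly map. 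Thus the proof reduces to showing that the class $\phi_*([M]_K) \in K_n(B\pi_1(M))$, which is rationally nonzero by the $K$-theoretic essentialness hypothesis, is not annihilated by $\mu$.

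The first and cleanest way I would try to execute this last step is to invoke the strong Novikov conjecture for the group $\Gamma = \pi_1(M)$: since the rational assembly map $K_*(B\Gamma) \otimes \Q \to K_*(C^*_{max}\Gamma) \otimes \Q$ is injective whenever the strong Novikov conjecture holds, the nonzero rational class $\phi_*([M]_K)$ would map to a nonzero element of $K_n(C^*_{max}\pi_1(M)) \otimes \Q$, contradicting $\alpha(M) = 0$. So if one is willing to assume that $\Gamma$ lies in the (very large) class of groups for which the strong Novikov conjecture is known, the conjecture follows at once.

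The hard part is, of course, to obtain the conclusion unconditionally, i.e.\ without assuming strong Novikov for $\pi_1(M)$. The strategy I would pursue here is the one promoted throughout this paper: try to upgrade $K$-theoretic essentialness to a geometric statement about $K$-area. Concretely, I would attempt to show that for a $K$-essential spin manifold the fundamental class $[M]_K$ is of infinite $K$-area in the sense of Definition \ref{Kflach}, by constructing, for each $\varepsilon > 0$, a bundle of finitely generated Hilbert $A$-modules with $\varepsilon$-small holonomy whose pairing with $\phi_*([M]_K)$ detects the rational nontriviality of that class. One natural candidate for $A$ is the maximal group $C^*$-algebra $C^*_{max}\pi_1(M)$ itself, together with an almost flat perturbation of the Mishchenko--Fomenko bundle; another is built from unitary representations close to the trivial one paired with cohomology classes in the subring generated by $H^{\le 2}(B\Gamma;\Q)$, in the spirit of \cite{HS(2008)}. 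Once infinite $K$-area is established, Corollary \ref{cor1} delivers $\alpha(M) \neq 0$, contradicting the assumption of positive scalar curvature.

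The principal obstacle is precisely the gap between the homological/$K$-homological detection of $\phi_*([M]_K)$ and its geometric realisation by almost flat Hilbert module bundles. Producing such bundles for an arbitrary $K$-essential class amounts to a form of the strong Novikov conjecture ``along'' that class, and this is exactly the unresolved issue highlighted in Question \ref{final_problem}; any complete proof of Conjecture \ref{conjess} will have to address it.
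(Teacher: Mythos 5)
This statement is a conjecture, and the paper itself offers no proof of it: the only thing the paper asserts is the remark immediately preceding it, namely that Conjecture \ref{conjess} holds whenever the rational assembly map $K_*(B\pi_1(M))\otimes\Q \to K_*(C^*_{max}\pi_1(M))\otimes\Q$ is injective. Your first paragraph and the ``strong Novikov'' step reproduce exactly that conditional argument (positive scalar curvature kills the Rosenberg index, the index factors through the assembly map applied to $\phi_*([M]_K)$, and rational injectivity then contradicts $K$-theoretic essentialness), so for the part of the statement that the paper actually addresses you are taking the same route.

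Your remaining paragraphs are a research program rather than a proof, and you say so yourself: upgrading $K$-theoretic essentialness to infinite $K$-area in the sense of Definition \ref{Kflach} so as to apply Theorem \ref{Kess}/Corollary \ref{cor1} would amount to proving a form of the strong Novikov conjecture along the class $\phi_*([M]_K)$, which is precisely what is open (compare the analogous open Question \ref{final_problem} for essential manifolds and the homological fundamental class; note that question concerns $[M]_H$, not $[M]_K$, so it is an analogue rather than literally the missing step). So there is no error in your reasoning, but be clear that what you have written establishes only the conditional statement already noted in the paper, not the conjecture itself.
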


It follows from the previous remarks that this conjecture holds, if the rational assembly map for the associated 
fundamental group is injective. An important consequence of Conjecture \ref{conjess} is the following 

\begin{conj}[\cite{GL(1980b)}] Let $M$ be a closed aspherical spin manifold. Then $M$ does not 
admit a metric of positive scalar curvature. 
\end{conj}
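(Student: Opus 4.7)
The statement is presented as an important consequence of Conjecture \ref{conjess}, so the plan is to derive it from that conjecture by showing that any closed aspherical spin manifold is automatically $K$-theoretic essential in the sense of Definition \ref{Kessential}.

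First I would invoke asphericity. If $M$ is closed and aspherical, then $M$ itself is a model for $B\pi_1(M)$, and the classifying map $\phi : M \to B\pi_1(M)$ for the universal cover may be chosen to be a homotopy equivalence. Hence the induced map on $K$-homology,
\[
   \phi_* \otimes \id_\Q : K_n(M)\otimes \Q \longrightarrow K_n(B\pi_1(M))\otimes \Q,
\]
is an isomorphism, and the problem reduces to showing that $[M]_K \neq 0$ in $K_n(M)\otimes\Q$.

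Second, I would verify non-triviality of the rationalized $K$-fundamental class by Chern character arguments. For a closed oriented $n$-manifold the Chern character
\[
   \ch : K_n(M)\otimes \Q \;\xrightarrow{\cong}\; \bigoplus_{k} H_{n-2k}(M;\Q)
\]
is an isomorphism, and the leading component of $\ch([M]_K)$ is (a nonzero multiple of) the ordinary homological fundamental class $[M]\in H_n(M;\Q)$, which is nonzero because $M$ is closed and oriented. Therefore $[M]_K \otimes 1 \neq 0$, and by the previous step
\[
   \phi_*([M]_K) \neq 0 \in K_n(B\pi_1(M))\otimes\Q.
\]
This exhibits $M$ as $K$-theoretic essential in the sense of Definition \ref{Kessential}.

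Finally, applying Conjecture \ref{conjess} to the $K$-theoretic essential spin manifold $M$ yields that $M$ does not admit a metric of positive scalar curvature, completing the derivation. The only subtle point in this sketch is the compatibility of the spin orientation with the Chern character computation, but this is a standard fact: the $K$-homological fundamental class of a spin manifold is exactly the one defined by the Dirac operator, whose Poincaré dual in $K^0$ pairs with the top cohomology via the $\hat{A}$-genus component of the Chern character, and in particular its top-degree homological component is the usual fundamental class. The genuine difficulty is of course hidden in Conjecture \ref{conjess} itself, which in turn is known only when the rational assembly map for $\pi_1(M)$ is injective — so unconditionally the statement remains a conjecture, but conditionally on Conjecture \ref{conjess} (equivalently on the strong Novikov conjecture for $\pi_1(M)$) the argument above is complete.
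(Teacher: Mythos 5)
Your proposal is correct and follows exactly the route the paper intends: the statement is offered only as a consequence of Conjecture \ref{conjess}, and your reduction --- asphericity makes $\phi$ a homotopy equivalence, while $\ch([M]_K)=[M]_H+(\text{lower-degree terms})$ shows $[M]_K\neq 0$ rationally, so $M$ is $K$-theoretic essential in the sense of Definition \ref{Kessential} --- is precisely the intended argument. As you note, the conclusion remains conditional on Conjecture \ref{conjess} (e.g.\ it holds when the rational assembly map for $\pi_1(M)$ is injective), which is all the paper claims.
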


The following is 
a variation of Definition \ref{Kessential} for singular homology. 

\begin{defn}[\cite{Gromov(1983)}] \label{homess} A closed oriented manifold $M^n$ is called {\em (rationally) essential}, if the classifying map $\phi : M \to B \pi_1(M)$ 
satisfies
\[
    \phi_*([M]_H) \neq 0 \in H_n(B \pi_1(M) ; \Q) \, , 
\]
where $[M]_H$ is the fundamental class of $M$ in singular homology. 
\end{defn} 

Recall that the homological Chern character defines an isomorphism 
\[
   \ch : K_{(*)}(M) \otimes \Q \cong H_{(*)}(M ; \Q) \, , 
\]
where the brackets in the subscripts indicate that we regard both 
theories as $\Z/2$-graded. Keeping in mind that for a closed spin${}^c$ manifold $M^n$ we have 
\[
   \ch([M]_K)  = [M]_H + c 
\]
where $c \in H_{< n }(M ; \Q)$ we see that essential spin${}^c$ manifolds are also $K$-theoretic essential. 
Hence it makes sense to formulate the following conjecture.

\begin{conj} An essential manifold does not admit a metric of positive scalar curvature. 
\end{conj}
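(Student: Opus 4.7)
The plan is to reduce the conjecture to the spin setting and then to Theorem~\ref{Kess} via the notion of infinite $K$-area. First, using the Chern character isomorphism $\ch : K_*(M) \otimes \Q \cong H_*(M;\Q)$ recalled just above the statement, any essential closed oriented manifold which happens to carry a $\spinc$ structure is automatically $K$-theoretic essential in the sense of Definition~\ref{Kessential}, since $\ch([M]_K) = [M]_H + c$ with $c$ of lower filtration. So in the $\spinc$ (in particular the spin) case the statement reduces to Conjecture~\ref{conjess}, and we are further led to a strengthening: show that the $K$-homology fundamental class of a $K$-theoretic essential spin manifold has infinite $K$-area. Granted that, Theorem~\ref{Kess} produces a non-vanishing image under the maximal assembly map, and the Bochner--Lichnerowicz--Weitzenb\"ock identity for the Mishchenko--Fomenko bundle then rules out any metric $g$ with $\scal_g > 0$, exactly as in the derivation of the Rosenberg obstruction at the beginning of Section~\ref{index}.

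The key step, and the main obstacle, is therefore to promote $K$-theoretic essentialness to infinite $K$-area. For each $\epsilon > 0$ one needs to exhibit a finitely generated Hilbert $A$-module bundle over $M$, for some auxiliary unital $C^*$-algebra $A$, whose holonomy is $\epsilon$-close to the identity and which pairs nontrivially with $\phi_*([M]_K)$. The natural source is a Mishchenko--Fomenko type bundle built over a sufficiently large piece of the universal cover $\widetilde M$, flattened by scaling, together with an ``infinite product'' construction over a cofinal sequence of such pieces in the spirit of Section~\ref{Karea} used to detect $\phi_*([M]_K) \in K_n(B\pi_1(M))\otimes\Q$. The hard point is that essentialness is a purely homological hypothesis and gives no a priori geometric control over $\widetilde M$; one must therefore manufacture almost-flat bundles from combinatorial data on $B\pi_1(M)$ alone, which is precisely the open problem recorded in Question~\ref{final_problem}.

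Finally, the non-spin orientable case requires an additional reduction. One natural route is to multiply $M$ by a suitable closed manifold $N$ (for example an $8$-dimensional spin manifold with $\hat A = 1$) to obtain a $\spinc$ or spin manifold whose essentialness is inherited from that of $M$, then invoke the first two steps; this is in the spirit of Theorem~\ref{Stolz} and reduces the statement to a stable version. Where this fails, one would fall back on the minimal hypersurface machinery of Schoen--Yau and Lohkamp to handle the remaining dimensions. Since the bridge between essentialness and infinite $K$-area is genuinely open, the proof sketched here is best viewed as a roadmap conditional on Theorem~\ref{Kess} combined with a positive answer to Question~\ref{final_problem}.
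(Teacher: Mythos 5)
The statement you are asked about is not a theorem of the paper: it is stated as an open conjecture, and the paper offers no proof of it (it only records evidence, e.g.\ weak enlargeability results of Dranishnikov, and emphasizes that the case where the universal cover is not spin is precisely the intriguing one). Your proposal, by your own closing admission, is a conditional roadmap rather than a proof, so it cannot be accepted as establishing the statement. Beyond that, two of the conditions you lean on are misstated or would fail. First, the bridge you need is that (K-theoretic) essentialness forces the fundamental class to have infinite $K$-area; in the paper's language this is a \emph{negative} answer to Question~\ref{final_problem} (and really its $K$-homological analogue for $[M]_K$, since the question as posed concerns $[M]_H$). You instead write that your argument is conditional on a \emph{positive} answer; a positive answer produces an essential manifold of finite $K$-area and thus a manifold on which your entire strategy is silent. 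Note also that even the intermediate Conjecture~\ref{conjess} (K-theoretic essential spin manifolds admit no PSC metric) is itself open and known only under injectivity of the rational assembly map, so your ``reduction'' in the spin$^c$ case trades one open conjecture for another.

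Second, the non-spin reduction does not work as described. Taking a product $M \times N$ with a closed spin manifold $N$ (such as an eight-dimensional $B^8$ with $\hat A = 1$) does not produce a spin or spin$^c$ manifold when $M$ is not one, since $w_2(M \times N)$ restricts to $w_2(M)$; Theorem~\ref{Stolz} presupposes spin and cannot repair this. Worse, the product typically destroys the hypothesis: if $N$ is simply connected of positive dimension, the classifying map of $M \times N$ kills the $N$-factor of the fundamental class, so $M \times N$ is not essential even when $M$ is. And when the universal cover of $M$ is not spin there is no Dirac operator to twist at all, so Theorem~\ref{Kess}, Corollary~\ref{cor1} and the Bochner--Lichnerowicz--Weitzenb\"ock argument simply do not apply --- this is exactly the regime the paper singles out as lacking index-theoretic obstructions. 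The fallback to Schoen--Yau minimal hypersurfaces is likewise unsubstantiated here: it is restricted by the regularity theory (dimension at most eight, or Lohkamp's program), and you give no argument that essentialness descends through the hypersurface induction. In short, the genuine gap is the one the paper itself identifies: there is no known passage from the purely homological hypothesis of essentialness to almost-flat Hilbert module bundles (or any other PSC obstruction) in general, and your sketch does not supply one.
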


This seems especially intriguing, if the universal cover of  this manifold is not spin 
(so that index theoretic obstructions 
are not available). Evidence for the 
conjecture in this case is provided by the fact that sometimes essential manifolds satisfy a weak form 
of enlargeability \cite{Dranish1, Dranish2}. 


\section{$K$-area for Hilbert module bundles}  \label{Karea} 


All manifolds in this section are closed, smooth and connected. 
We recall the following definition from  \cite{GL(1983)}. 

\begin{defn} \label{enlargeable}ÊLet $(M^n,g)$ be an orientable Riemannian manifold. 
\begin{itemize} 
   \item We call $M$ {\em enlargeable}, if for every $\epsilon > 0$ there is a Riemannian 
             cover $(\overline{M}, \overline{g})$ of $(M,g)$ together 
with an $\epsilon$-Lipschitz map $f_{\epsilon} : \overline{M} \to S^n$ which is constant outside of a compact subset of 
       $\overline M$ and of non-zero degree. 
\item We call $(M,g)$ {\em area-enlargeable}, if for every $\epsilon > 0$ there is a Riemannian 
cover $(\overline{M}, \overline{g})$ of $(M,g)$ together with  a smooth map $f_{\epsilon} : \overline{M} \to S^n$ 
which is    $\epsilon$-contracting on $2$-forms, constant outside 
of a compact subset of $\overline M$  and  of nonzero degree.
\end{itemize}
\end{defn}

Because $M$ is compact, all Riemannian metrics on $M$ are in bi-Lipschitz correspondence and hence
both of the above properties are independent of the specific choice of the metric $g$ on 
$M$. Enlargeability is therefore  a purely topological property of $M$. Indeed, whether $M$ is 
enlargeable depends only on the 
image of the fundamental class of $M$ in the rational group homology of $\pi_1(M)$ under the classifying 
map, see \cite[Corollary 3.5]{BrunnHan} restated as  Theorem \ref{BrunnH} below.
We do not know whether a similiar result holds for area-enlargeability. 

Examples for enlargeable manifolds are manifolds which admit Riemannian metrics of 
nonpositive sectional curvature. This follows from the Cartan-Hadamard theorem.

Area-enlargeable spin manifolds allow the construction of finite dimensional Hermitian 
twisting bundles for the Dirac operator as described after Example \ref{badex}. 
We remark that 
the index theoretic setting explained there needs to be slightly 
generalized (relative index theory on open manifolds, see \cite{GL(1983)}), if infinite covers of $M$ are involved (this case is not excluded in 
Definition \ref{enlargeable}). These considerations lead to the following theorem.

\begin{thm}[\cite{GL(1980b), GL(1983)}] \label{gromov-lawson} 
Let $M$ be an area-enlargeable spin manifold. Then $M$ does not admit a metric of 
positive scalar curvature. 
\end{thm}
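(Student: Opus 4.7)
The plan is the classical Gromov--Lawson argument in contrapositive form: assume a metric of positive scalar curvature exists, use area-enlargeability to manufacture a twisting bundle for the Dirac operator with arbitrarily small curvature but fixed non-trivial topological contribution, and derive a contradiction from the Bochner--Lichnerowicz--Weitzenb\"ock formula together with the relative index theorem on open manifolds.

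Concretely, suppose $M^n$ carries a metric $g$ with $\sigma_0:=\min_{p\in M}\scal_g(p)>0$. After a preliminary reduction --- either stabilize $M$ with a flat torus so that the dimension becomes divisible by~$4$, or pass to Hitchin's $KO$-theoretic variant of the Dirac argument --- I may assume $n=4m$ and work with the ordinary complex Dirac operator. For every $\epsilon>0$ area-enlargeability supplies a Riemannian cover $(\overline M,\overline g)$ with $\scal_{\overline g}\ge\sigma_0$ and a smooth map $f_\epsilon:\overline M\to S^n$ of non-zero degree, $\epsilon$-contracting on $2$-forms, and constant outside a compact subset of $\overline M$. Fix once and for all a Hermitian bundle with connection $(E_0,\nabla^{E_0})\to S^n$ whose top Chern character $\ch_m(E_0)\in H^n(S^n;\Q)$ is non-zero and whose connection is flat near the point at which $f_\epsilon$ is eventually constant. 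Set $(E,\nabla^E):=(f_\epsilon^*E_0,f_\epsilon^*\nabla^{E_0})$; this bundle is trivialised outside a compact subset of $\overline M$, and its curvature satisfies
\[
\|R^E\|_{\overline g}\le\epsilon\cdot\|R^{E_0}\|_{g_{S^n}}=:C\epsilon
\]
by the $2$-form-contracting property. Choose $\epsilon<\sigma_0/(4C)$.

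Next, form the twisted Dirac operator $D^+_{\overline g,E}$ on $\overline M$ together with its reference $D^+_{\overline g,\C^k}$ (trivial bundle of the same rank~$k=\rk E$), which agrees with $D^+_{\overline g,E}$ outside a compact set by construction. The Bochner--Lichnerowicz--Weitzenb\"ock formula
\[
D^-_{\overline g,E}\circ D^+_{\overline g,E}=\nabla^*\nabla+\tfrac{\scal_{\overline g}}{4}+R^E
\]
has a pointwise positive zeroth-order term on all of $\overline M$, and likewise for the untwisted operator; both Dirac operators are thus invertible, so the Gromov--Lawson relative index theorem on complete open manifolds gives
\[
0=\ind(D^+_{\overline g,E})-\ind(D^+_{\overline g,\C^k})=\deg(f_\epsilon)\cdot\bigl\langle\ch(E_0),[S^n]\bigr\rangle,
\]
since $\mathcal{\hat A}(S^n)=1$ and the pullback of $\ch(E)-k$ is compactly supported on $\overline M$. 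The right-hand side is non-zero by construction, a contradiction.

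The main technical obstacle is precisely this last index-theoretic input: one must justify the Gromov--Lawson relative index theorem on the possibly non-compact complete cover $\overline M$, matching the analytic relative index (well defined because the two operators coincide at infinity and each has uniformly positive zeroth-order term outside the compact support of $R^E$) to a compactly supported topological index integral that collapses to $\deg(f_\epsilon)$ times a fixed characteristic number on $S^n$. Every other step is essentially routine once the dimension-parity reduction is in place and $(E_0,\nabla^{E_0})$ with non-vanishing top Chern character has been chosen.
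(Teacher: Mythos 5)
Your proposal is correct and follows essentially the same route the paper indicates for this theorem: twist the Dirac operator on the (possibly infinite) cover with the pullback of a bundle over $S^n$ having non-zero top Chern character, use the $\epsilon$-area-contracting property to make the curvature term in the Bochner--Lichnerowicz--Weitzenb\"ock formula smaller than $\scal_{\overline g}/4$, and conclude via the Gromov--Lawson relative index theorem on complete open manifolds \cite{GL(1983)}, exactly as the paper's sketch after Example \ref{badex} and Theorem \ref{gromov-lawson} prescribes. Your preliminary reduction to dimension divisible by four is stronger than needed (even dimension suffices, with odd dimensions handled by crossing with $S^1$), but it is harmless since crossing with flat tori preserves both area-enlargeability and positive scalar curvature.
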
 

At this point one might ask whether the enlargeability obstruction 
is reflected by the Rosenberg obstruction. 

The twisting bundles $E \to M$ of arbitrarily small curvature 
going into the obstruction expressed in Theorem \ref{gromov-lawson} 
motivate the notion of {\em $K$-area}, see \cite{Gromov(1995)}. 

In this section we will introduce a related property for $K$-homology classes of $M$. 
Examples of such $K$-homology classes are $K$-theoretic fundamental classes of 
area-enlargeable spin manifolds, see Proposition \ref{enlinfinite}.
The main result in this section, Theorem \ref{Kess}, 
shows that classes in $K_0(M) \otimes \Q$ of infinite $K$-area are mapped to non-zero classes 
in $K_0(C^*_{max} \pi_1(M))$ under the assembly map. Together with Proposition \ref{enlinfinite} this  implies that 
 the Rosenberg obstruction subsumes the enlargeability obstruction of Gromov and Lawson:

\begin{thm}[\cite{HS(2006), HS(2007)}] Let $M^n$ be an area-enlargeable spin manifold. Then the Rosenberg index 
$\alpha(M) \in  K_n(C^*_{max} \pi_1(M))$ is different from zero. 
\end{thm}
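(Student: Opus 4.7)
The plan is to deduce the theorem as an immediate consequence of the two results highlighted just above it. First, by Proposition \ref{enlinfinite}, area-enlargeability of the closed spin manifold $M^n$ implies that its $K$-theoretic fundamental class $[M]_K \in K_n(M)$ is of infinite $K$-area in the sense of Definition \ref{Kflach}. Second, by Theorem \ref{Kess}, any $K$-homology class of infinite $K$-area in a closed manifold maps to a nonzero element under the assembly map $K_n(M) \to K_n(C^*_{max} \pi_1(M))$. Since this image is precisely the Rosenberg index $\alpha(M)$, by the alternative analytic $K$-homology description recalled in Section \ref{index}, the conclusion $\alpha(M) \neq 0$ follows.

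Because the formal combination is trivial, the real content lies in proving Proposition \ref{enlinfinite}. Here the plan is to manufacture, for each $\epsilon > 0$, a bundle of finitely generated Hilbert $A$-modules on $M$ whose holonomy representation is $\epsilon$-close to the identity and whose index-theoretic pairing with $[M]_K$ is nonzero. Area-enlargeability supplies the geometric input: for every positive integer $k$ one chooses a Riemannian cover $\overline{M}_k \to M$ and a smooth map $f_k : \overline{M}_k \to S^n$ of nonzero degree, constant outside a compact set, and $\epsilon_k$-contracting on $2$-forms, with $\epsilon_k \to 0$. Pulling back a fixed finite-dimensional Hermitian bundle $V \to S^n$ of nontrivial top Chern class produces a Hermitian bundle $f_k^* V$ on $\overline{M}_k$ whose curvature is pointwise of norm at most $C \epsilon_k$ and whose twisted index against the spin Dirac operator on $\overline{M}_k$ is nonzero, by relative index theory on open manifolds (as in \cite{GL(1983)}) combined with the Atiyah-Singer computation on $S^n$.

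The main obstacle is that the cover $\overline{M}_k$ may be infinite, so one cannot directly push $f_k^* V$ down to a finite-dimensional bundle on $M$. The remedy, sketched in Section \ref{Karea} as the \emph{infinite product construction}, is to encode the whole family $\{f_k^* V\}_k$ of twisting bundles, living on possibly infinite covers, as a single bundle of finitely generated Hilbert $A$-modules on $M$ itself, where $A$ is a suitable unital $C^*$-algebra assembled out of matrix algebras and the regular representation of $\pi_1(M)$ on the fibres over the covers. One must verify two things simultaneously: that the resulting bundle has holonomy $\epsilon$-close to the identity with respect to the operator norm on $A$, and that the nonvanishing of the twisted index on each $\overline{M}_k$ descends to a nonzero $K$-theoretic pairing with $[M]_K$ against this Hilbert module bundle on $M$. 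This bookkeeping, rather than the formal reduction itself, is where the substantive work lies; once it is completed, the application of Theorem \ref{Kess} delivers $\alpha(M) \neq 0$ with no further effort.
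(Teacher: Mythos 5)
Your proposal matches the paper's own argument: the theorem is obtained exactly by combining Proposition \ref{enlinfinite} (area-enlargeability gives $[M]_K$ infinite $K$-area) with Theorem \ref{Kess} (infinite $K$-area classes survive the assembly map), and your sketch of Proposition \ref{enlinfinite} via pulled-back sphere bundles on covers and the Hilbert-module encoding for infinite covers is precisely the route the paper attributes to \cite{HS(2006)} and \cite[Proposition 1.5]{HS(2007)}. No substantive differences to report.
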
 

A convenient setting for our discussion is provided by Kasparov's $\KK$-theory, cf. \cite{Black}, which associates
to any pair of separable $C^*$-algebras $A$ and $B$ an abelian group $\KK(A,B)$. We 
work over the field of complex numbers and will restrict attention to the special cases $A = C(M)$, $B = \C$ and $A = \C$, 
$B = C(M) \otimes S$ for 
a seperable unital $C^*$-algebra $S$. Here we will work only with ungraded $\KK$-groups. 

According to the analytic description of $K$-homology \cite{HR} we have a canonical identification 
\[
   \KK(C(M), \C) \cong K_0(M)
\]
the $0$-th $K$-homology of $M$ which, for example, can be defined homotopy theoretically as the homology theory
dual to topological $K$-theory \cite{Atiyah}. 

Elements in $\KK(A,B)$ are represented by  {\em Fredholm triples} $(E, \phi, F)$
where $E$ is a countably generated graded Hilbert $B$-module, $\phi: A \to \mathcal{B}(E)$ is a graded 
$*$-homomorphism (here $\mathcal{B}(E)$ is the graded $C^*$-algebra of graded adjointable
bounded  $B$-module 
homomorphisms $E \to E$) and $F \in \mathcal{B}(E)$ is an operator of degree $1$ such that the commutator
$[F, \phi(a)]$ and the operators $(F^2 - \id_E)\phi(a)$ and $(F-F^*) \phi(a)$ are $B$-compact  for all $a \in A$. 
In our context we will be dealing with Fredholm triples of very special forms which will be specified in a moment.  The reader who is interested in more information on the notion of Hilbert modules 
and the construction of Kasparov $\KK$-theory can consult the sources \cite{Black,Wegge-Olsen}. 

A typical situation arises  when $M$ is a spin manifold of even dimension equipped with a Riemannian metric $g$.  The Dirac operator from Section \ref{index} 
\[
   D_g : \Gamma(S^{\pm}) \to \Gamma(S^{\mp}) 
\]
is a symmetric graded first-order elliptic differential operator. It therefore gives rise 
to an element $[D_g] \in \KK(C(M), \C)$ represented by the 
 Fredholm triple $(L^2(S) , \phi, F)$ where $L^2(S)$ is the space of $L^2$-sections of the bundle $S^+ \oplus S^-$, the map 
 $\phi: C(M) \to \mathcal{B}(L^2(S))$ is the standard representation as multiplication operators and $F \in \mathcal{B}(E)$  
 is a bounded operator which is obtained from $D_g$ by functional calculus. 

The construction works more generally for symmetric graded elliptic differential operators on graded smooth Hermitian vector bundles over $M$, cf. \cite[Theorem 10.6.5]{HR}. In this way we may 
think of elements in $\KK(C(M), \C) = K_0(M)$ as a kind of generalized symmetric elliptic differential operators over $M$. 
In this picture the index  of a graded elliptic differential operator corresponds to the image of the 
$\KK$-class represented by this operator under the map 
\[
    K_0(M) \to K_0(*)  = \Z 
\]
which is induced by the unique map $M \to *$. 

If $E \to M$ is a (finite dimensional) Hermitian bundle with a Hermitian connection we obtain the twisted Dirac 
operator 
\[
    D_{g,E} : \Gamma(S^{\pm} \otimes E) \to \Gamma(S^{\mp} \otimes E) 
\]
which is again a symmetric graded  elliptic differential operator and has an index $\ind(D_{g,E}) \in \Z$. 

The index 
of the twisted operator $D_{g,E}$ has the following description in $\KK$-theory, cf.~\cite{Black}.  
The bundle $E \to M$ represents a class $[E]$ in topological $K$-theory $K^0(M)$, which can be canonically 
identified with $\KK(\C, C(M))$. The element $[E] \in \KK(\C, C(M))$ is represented by the Fredholm triple
$(\Gamma(E), \phi, 0)$ where $\Gamma(E)$ is the $C(M)$-module of continuous sections
$M \to E$ equipped with the $C(M)$-valued inner product given by fibrewise application of the Hermitian 
inner product on $E$ and $\phi:  \C \hookrightarrow \mathcal{B}(\Gamma(E))$ is the standard embedding. 

Under the Kasparov product map \cite{Black} 
\[
     \KK(\C, C(M)) \times \KK(C(M), \C) \to \KK(\C, \C) = \Z
\]
which in this case corresponds to the usual Kronecker product pairing of $K$-homology and topological $K$-theory 
(i.e. $K$-cohomology)
\begin{eqnarray*} 
    K^0(M) \times K_0(M) & \to & \Z  \\
       (c,h) & \mapsto & \langle c , h \rangle 
\end{eqnarray*}
the pair $([E], [D_g])$ is sent to $\ind(D_{g,E}) \in \Z$. 

This point of view may be generalized by allowing twisting bundles 
$E \to M$ which are locally trivial  bundles of finitely generated right Hilbert $A$-modules where $A$ is a 
unital $C^*$-algebra. 

We recall \cite{Schick(2005), Wegge-Olsen} 
that each finitely generated Hilbert $A$-module bundle $E \to M$ is isomorphic 
to an orthogonal direct summand of a trivial $A$-module bundle $M \times A^n \to M$ where $A^n$ 
carries the canonical $A$-valued inner product
\[
   \langle   (a_1, \ldots, a_n) , (b_1, \ldots, b_n) \rangle \mapsto a_1^* b_1 + \ldots + a_n^* b_n \, . 
\]
We can take this description as definition of finitely generated Hilbert $A$-module bundles. 

Let $E \to M$ be a finitely generated Hilbert $A$-module 
bundle.  We associate to $E \to M$ a $\KK$-class $[E] \in KK(\C , C(M) \otimes A)$ as follows. 
First note that the space $\Gamma(E)$ of continuous sections in $E$ is a finitely 
generated Hilbert $(C(M) \otimes A)$-module and the identity $\Gamma(E) \to \Gamma(E)$
is a $(C(M) \otimes A)$-compact 
(indeed finite rank) operator by a partition of unity argument. Therefore the triple $(\Gamma(E), \phi, 0)$, 
where $\phi: \C \to \mathcal{B}(\Gamma(E))$ is the 
standard embedding, defines an element in $\KK(\C , C(M) \otimes A)$. 

Using the Kasparov product (which we again interprete as a Kronecker product pairing) 
\begin{eqnarray*}
    \KK(\C, C(M) \otimes A) \times \KK(C(M), \C) & \to & \KK(\C, A) = K_0(A) \\
       (c,h)     & \mapsto  & \langle c , h \rangle 
\end{eqnarray*}
we have a pairing of generalized elliptic differential operators on $M$ and finitely 
generated Hilbert $A$-module bundles. 

If $M$ is a Riemannian spin manifold of even dimension, then the element in $\langle [E], [D_g] \rangle \in 
K_0(A)$ can be interpreted as the index of the Dirac 
operator $D_g$ twisted with the bundle $E$, cf.~\cite{Black}.  
Hence, for the special case when $E \to M$ is the Mishchenko-Fomenko bundle, the class 
$\langle [E], [D_g] \rangle$
coincides with the Rosenberg index $\alpha(M)$ defined in Section \ref{index}.

We will now single out those $K$-homology classes $h \in K_0(M)$ which 
can be detected by finitely generated Hilbert $A$-module bundles of arbitrarily small curvature. In the following 
let $M$ be a closed smooth Riemannian manifold. In order
to avoid the discussion of smooth bundles and curvature notions for infinite dimensional bundles we
proceed as follows.

Recall that the {\em path groupoid} $\mathcal{P}_1(M)$ of $M$  has as objects the points in $M$ and 
as morphisms $\mathcal{P}_1(M)(x,y)$ the set of piecewise smooth paths $[0,1] \to M$
connecting $x$ and $y$. This is a topological category, in particular both the sets of objects and morphisms are
topological spaces. 

Let $A$ be a unital $C^*$-algebra and let $E \to M$ be a finitely generated
Hilbert $A$-module bundle.  
The {\em transport category} $\mathcal{T}(E)$ 
has as objects the points in $M$ and as set of morphisms 
\[
    \mathcal{T}(E)(x,y) := {\rm Iso}_A(E_x,E_y) \,.  
\]
This is again a topological category where the set of morphisms is topologized by choosing 
local trivializations in order to identify nearby fibres of $E \to M$ and the set of Hilbert 
$A$-module isomorphisms 
${\rm Iso}_A(E_x,E_y)$ is topologized as a subset of the Banach space $\Hom_A(E_x,E_y)$. 

A {\em holonomy representation} on $E \to M$ is a continuous functor
\[ 
     \mathcal{H}: \mathcal{P}_1(M) \to \mathcal{T}(E) \, . 
\]
It is called {\em $\epsilon$-close to the identity at scale $\ell$}, if for each $x \in M$ and 
each closed loop $\gamma \in \Mor
(\mathcal{P}_1(M))$ based at $x \in M$ and of length $\ell(\gamma) \leq \ell$ we have
\[
  \|  \mathcal{H}(\gamma) - \id_{E_x} \| < \epsilon \cdot \ell( \gamma) \, . 
\]
Here we use the operator norm on the left hand side.

The following proposition establishes a link to the notion to parallel transport in differential geometry.

\begin{prop} \label{altSchick} Depending on $M^n$ there are a real constants $C, \ell > 0$ 
so that the following holds.
Let $E \to M$ be a finite dimensional smooth Hermitian bundle of rank $d$ equipped with a 
smooth Hermitian connection $\nabla$ 
whose curvature $\eta \in \Omega^2(M ; \mathfrak{u}(d))$ is norm bounded by $\epsilon$.
Then the parallel transport with respect to $\nabla$ 
is $(C  \cdot \epsilon)$-close to the identity at scale $\ell$. 
\end{prop}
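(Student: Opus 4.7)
The plan is to reduce the statement to the classical formula expressing holonomy as a curvature integral (non-abelian Stokes), combined with an elementary area estimate for short loops. Since $M$ is compact, fix $\ell>0$ smaller than, say, one third of the injectivity radius of $(M,g)$, so that every loop $\gamma\in\Mor(\mathcal{P}_1(M))$ based at some $x\in M$ with $\ell(\gamma)\le\ell$ lies entirely in a single normal geodesic ball $B_x(\ell)$ on which the exponential map $\exp_x$ is a diffeomorphism with uniformly bounded derivatives (the bounds depending only on $M$).

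Given such a loop $\gamma:[0,1]\to M$, I would construct the geodesic null-homotopy
\[
H:[0,1]\times[0,1]\to M,\qquad H(s,t)=\exp_x\bigl(s\cdot\exp_x^{-1}\gamma(t)\bigr),
\]
so that $H(0,t)=x$ and $H(1,t)=\gamma(t)$. Using the Rauch-type bounds on the differential of $\exp_x$ in $B_x(\ell)$, the area of $H$ admits an estimate
\[
\operatorname{area}(H)=\int_0^1\!\!\int_0^1\|\partial_s H\wedge\partial_t H\|\,ds\,dt\le C_1\,\ell(\gamma)^2,
\]
where the constant $C_1$ depends only on $(M,g)$ and the choice of $\ell$.

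Now let $T(s,t)$ denote parallel transport along the curve $u\mapsto H(u,t)$ from $u=0$ to $u=s$, and consider the one-parameter family of holonomies $h(s):=\mathcal H(\gamma_s)$ around the loops $\gamma_s(t):=H(s,t)$ (with the degenerate loops $\gamma_0$ contributing the identity). The standard differentiation-under-parallel-transport computation (the non-abelian Stokes theorem, cf.\ \cite[Ch.\ II]{LawsonMichelsohn}) gives
\[
h(1)-h(0)=-\int_0^1\!\!\int_0^1 T(s,t)^{-1}\,\eta\bigl(\partial_s H,\partial_t H\bigr)\,T(s,t)\,ds\,dt.
\]
Because parallel transport is unitary on the fibres, the operator norm of the integrand is bounded by $\|\eta\|\cdot\|\partial_s H\wedge\partial_t H\|\le\epsilon\cdot\|\partial_s H\wedge\partial_t H\|$. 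Integrating and applying the area bound yields
\[
\bigl\|\mathcal H(\gamma)-\id_{E_x}\bigr\|\le\epsilon\cdot\operatorname{area}(H)\le C_1\,\epsilon\,\ell(\gamma)^2\le(C_1\ell)\,\epsilon\,\ell(\gamma),
\]
so the claim holds with $C:=C_1\ell$.

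The only delicate point is the area estimate in the middle step; everything else is formal. The required bound is a soft geometric statement that follows from uniform control on $\exp_x$ on balls of radius $\ell$ together with compactness of $M$, so the choice of $\ell$ (smaller than the injectivity radius, and small enough that the Jacobi-field comparison gives a uniform bound) absorbs all remaining geometric data into the single constant $C$.
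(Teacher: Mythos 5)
Your argument is correct and reaches the required bound, but by a genuinely different route than the paper. The paper covers $M$ by finitely many cube-shaped closed sets $D_i$, trivializes $E|_{D_i}$ by a \emph{radial gauge} (parallel transport inductively along coordinate directions), invokes an estimate in the style of \cite[Lemma~2.3]{HS(2006)} to bound the connection one-form $\|\omega|_{D_i}\|$ in terms of $\|\eta|_{D_i}\|$, and then solves the parallel-transport ODE $\phi'+ \omega(\zeta')\phi=0$ with a Gronwall-type bound, linearizing it by subdividing $\zeta$ and using unitarity. You replace the entire gauge-fixing step by the geodesic null-homotopy $H(s,t)=\exp_x\bigl(s\,\exp_x^{-1}\gamma(t)\bigr)$ inside a normal ball, estimate $\operatorname{area}(H)\le C_1\ell(\gamma)^2$ from uniform bounds on $d\exp_x$ and compactness, and then apply the holonomy--curvature--area inequality. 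This is cleaner and more conceptual; the paper's route has the mild advantage of being self-contained given the gauge lemma and of not invoking the non-abelian Stokes theorem.

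One small imprecision worth fixing: the displayed Stokes formula is not literally correct in the non-abelian case. What one has is a differential equation
\[
h'(s)=-A(s)\,h(s),\qquad A(s)=\int_0^1 P_s(t)^{-1}\,\eta\bigl(\partial_s H,\partial_t H\bigr)\,P_s(t)\,dt,
\]
where $P_s(t)$ is parallel transport along $\gamma_s$ from $0$ to $t$, rather than the direct integral identity for $h(1)-h(0)$ you wrote (which drops the factor $h(s)$). Since $P_s(t)$ and $h(s)$ are unitary, this still gives
\[
\bigl\|\mathcal H(\gamma)-\id_{E_x}\bigr\|=\|h(1)-h(0)\|\le\int_0^1\|h'(s)\|\,ds\le\int_0^1\!\!\int_0^1\bigl\|\eta(\partial_s H,\partial_t H)\bigr\|\,ds\,dt\le\epsilon\cdot\operatorname{area}(H),
\]
so your final estimate and the constant $C=C_1\ell$ are correct; you should just derive the bound from the differential inequality rather than from the ill-stated integral identity.
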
 

\begin{proof} By a Lebesgue number argument there is a small $\ell > 0$ and a cover of $M^n$ 
by finitely many closed subsets $D_1, \ldots, D_k \subset M$ so that the following holds: Each $D_i$ is diffeomorphic 
to the $n$-dimensional unit cube $[0,1]^n \subset \R^n$ and each closed loop in $M$ of length 
at most $\ell$ is contained in a subset $D_i$. It is hence enough to prove the assertion 
for a closed loop $\gamma \in \Mor(\mathcal{P}_1(M))$ contained in one of these subsets $D_i \subset M$ and based at 
a point $x \in D_i$. In the following we write $D$ instead of $D_i$ and identify $D$ and $[0,1]^n$ by 
a fixed diffeomorphism. 

Let $E \to M$ be a Hermitian bundle of rank $d$ as described in the proposition. 
We construct a trivialization of $E|_D \to M$ by choosing an isomorphism
$E|_{(0, \ldots, 0)} \cong \C^d$ and extending the trivialization inductively into each 
of the $n$ coordinate directions by parallel transport.  We denote the induced connection one form with 
respect to this trivialization by $\omega \in \Omega^1 (D; \mathfrak{u}(d))$. 

Now an argument 
similiar to \cite[Lemma 2.3]{HS(2006)}, but using the Riemannian metric on $[0,1]^n$ induced by $M$, 
shows that there is a number $C > 0$, which depends on $D$, but not on the bundle $E \to M$, so that 
\[
    \| \omega|_D \| \leq  C \cdot   \| \eta|_D \|  \, , 
\]
where we use  the operator norm on $\mathfrak{u}(d)$ and 
the maximum norms on the unit sphere bundles of $T^*D$ and $\Lambda^2 D$.

Let $\phi : [0,1] \to E$ be a parallel vector field along a piecewise smooth (not necessarily closed) 
path $\zeta : [0,1] \to D \subset M$. By virtue of the given trivialization
consider $\phi$ as a smooth map $[0,1] \to \C^d$. As such it satisfies the differential equation 
\[
    \phi'(t) + (\omega_{\gamma(t)} (\gamma'(t))) \cdot \phi(t) = 0 
\]
and it follows that
\[
    \| \phi(1) - \phi(0) \| \leq \exp \big( \ell (\zeta) \cdot \| \omega|_D \| \big) \cdot \| \phi(0) \| \, . 
\]
Because we started with a Hermitian connection on $E$ we get $\|\phi(1) \| = \|\phi(0)\|$ which implies that 
we can assume (by subdividing $\zeta$ into small pieces and appealing to the triangle 
inequality) that $\ell(\zeta)$ is arbitrarily small. Because $\exp :  \C^d \to \C^d$ is uniformly Lipschitz 
continuous on each bounded neighbourhood of $0$ with Lipschitz constant arbitrarily close to $1$
we hence obtain
\[
   \| \phi(1) - \phi(0) \| \leq 1.5 \cdot \ell (\zeta) \cdot \| \omega|_D \| \cdot \| \phi(0) \| 
 \]
 from which the claim of the proposition follows. 
\end{proof} 

\begin{defn} \label{Kflach}ÊLet $M$ be a closed smooth manifold and let $h \in K_0(M) \otimes \Q$. 
We say that $h$ has {\em infinite $K$-area}, if there is a Riemannian metric on $M$ and a number $\ell > 0$ 
so that the following holds: For each $\epsilon > 0$ there is a unital $C^*$-algebra 
$A$ and a finitely generated Hilbert $A$-module bundle $E \to M$  which 
carries a holonomy representation which is $\epsilon$-close to the identity at scale $\ell$ and 
satisfies 
\[
      \langle [E] , h \rangle \neq 0 \in K_0(A) \otimes \Q
\]
where $[E] \in \KK(\C, C(M) \otimes A)$ is the element represented by $E \to M$. If $h$ is not of infinite $K$-area, we 
say that it is {\em of finite $K$-area}. 

A class $h \in H_{ev}(M;\Q)$ is defined to be of infinite $K$-area, if the class $\ch^{-1}(h) \in K_0(M) \otimes \Q$ is of infinite $K$-area. 
\end{defn}

By adapting the involved scale appropriately it is clear that for testing whether $h$ is of infinite $K$-area or not 
any Riemannian metric on $M$ can be used. 

The notion of finitely generated Hilbert $A$-module bundles can be generalized
to $C^*$-algebras without unit. However, in the context of Definition \ref{Kflach}, 
this does not result in a wider class of $K$-homology classes of infinite $K$-area, since 
any finitely generated Hilbert $A$-module bundle is in a trivial way also a finitely 
generated Hilbert $A^+$-module bundle over the unitalization $A^+$ of $A$. This procedure 
does not change the property of $\langle [E], h \rangle $ being zero or not (in the rationalization of the 
$K$-homology of $A$ and $A^+$ respectively). 

Our Definition \ref{Kflach} is inspired by the preprint  \cite{Listing(2010)} 
where the property of finite $K$-area is investigated from a homological perspective. 
In contrast to the approach in loc.~cit.~and in the original source \cite{Gromov(1995)} we do not further quantify 
classes of finite $K$-area, since we will be concentrating on the property of infinite $K$-area as one instance of 
a largeness property besides enlargeability and essentialness.   
The discussion in \cite{Listing(2010)} and other previous papers
is restricted to finite dimensional smooth Hermitian vector bundles as twisting bundles $E \to M$ occuring in 
our Definition \ref{Kflach}. 
Our more general setting is needed in connection with 
enlargeability questions and applications to the strong Novikov conjecture, see Section \ref{Novikov}. 

By a suspension procedure we can also define classes in $h \in K_1(M) \otimes \Q$ of infinite $K$-area by 
requiring that the class $h \times [S^1]_K \in K_0(M \times S^1) \otimes \Q$ be of infinite $K$-area, with an 
arbitrary choice of  a $K$-theoretic 
fundamental class $[S^1]_K \in K_1(S^1)$. Note that with this definition the class $[S^1]_K \in K_1(S^1) \otimes \Q$ is 
of infinite $K$-area. The following discussion can be extended to $K$-homology classes of 
odd degree, but we restrict our exposition to classes in $K_0(M) \otimes \Q$ for simplicity.  

The following two facts are similiar to Propositions 2 and 3 in \cite{Listing(2010)}, cf.~also Proposition 3.4. and Theorem 3.6 in \cite{BrunnHan}. 

\begin{prop} \label{subvector} The elements of finite $K$-area in $K_0(M) \otimes \Q$ form a rational vector subspace. 
\end{prop}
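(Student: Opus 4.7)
First, I would unwind Definition~\ref{Kflach} to its negation: a class $h \in K_0(M) \otimes \Q$ is of finite $K$-area iff, for every Riemannian metric $g$ on $M$ and every $\ell > 0$, there exists $\epsilon > 0$ such that every finitely generated Hilbert $A$-module bundle $E \to M$ (with $A$ any unital $C^*$-algebra) whose holonomy representation is $\epsilon$-close to the identity at scale $\ell$ satisfies $\langle [E], h \rangle = 0$ in $K_0(A) \otimes \Q$. The zero class trivially has finite $K$-area, since $\langle [E], 0 \rangle = 0$ for every $E$.

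The key tool is the $\Q$-bilinearity of the Kronecker pairing
\[
\KK(\C, C(M) \otimes A) \times (K_0(M) \otimes \Q) \to K_0(A) \otimes \Q.
\]
Scalar multiplication is immediate: for $\lambda \in \Q$ with $\lambda \neq 0$, the identity $\langle [E], \lambda h \rangle = \lambda \langle [E], h \rangle$ in a $\Q$-vector space makes the two sides simultaneously zero or nonzero, so $\lambda h$ inherits finite $K$-area from $h$.

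The main step is additivity, and it is where the structure of the definition must be exploited with care. Given $h_1, h_2$ of finite $K$-area, I would fix an arbitrary pair $(g, \ell)$ and apply the finite $K$-area hypothesis to each $h_i$ using this \emph{common} data; this yields thresholds $\epsilon_1, \epsilon_2 > 0$, and I would set $\epsilon := \min(\epsilon_1, \epsilon_2)$. Then any bundle $E \to M$ whose holonomy is $\epsilon$-close to the identity at scale $\ell$ is a fortiori $\epsilon_i$-close for $i = 1, 2$, hence satisfies $\langle [E], h_1 \rangle = \langle [E], h_2 \rangle = 0$ in $K_0(A) \otimes \Q$. Bilinearity then gives $\langle [E], h_1 + h_2 \rangle = 0$, establishing finite $K$-area of $h_1 + h_2$.

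The only step requiring a moment's thought is the synchronisation of the two $\epsilon$-thresholds in the additivity argument. This works precisely because finite $K$-area is a \emph{universal} statement over all pairs $(g, \ell)$, which allows one to extract $\epsilon_1, \epsilon_2$ from $h_1, h_2$ relative to the same metric and scale, and because a smaller $\epsilon$ at fixed $\ell$ is a strictly stronger closeness condition. Were the definition instead existentially quantified over $(g, \ell)$, one could not in general align the data for the two summands and this minimum argument would break down; in that respect it is the quantifier structure, rather than any deep analytic input, that makes the subspace property go through.
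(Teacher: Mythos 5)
Your proof is correct and is essentially the paper's own argument: both rest only on the bilinearity of the Kronecker pairing and the quantifier structure of Definition~\ref{Kflach}, the paper merely running it in contrapositive form (if $h+h'$ has infinite $K$-area, then taking $\epsilon = 1/k$ one of $h$, $h'$ must pair nontrivially with the witnessing bundles for infinitely many $k$, hence has infinite $K$-area), which is your minimum-of-thresholds argument read backwards.
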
 

\begin{proof} Obviously $0 \in K_0(M) \otimes \Q$ is of finite $K$-area. 
If $h \in K_0(M) \otimes \Q$ is of infinite $K$-area, then the same is true for any nonzero rational 
multiple of $h$. This implies that the set of elements of finite $K$-area is closed under scalar multiplication. 
Now assume that $h + h'$ is of infinite $K$-area.  It follows from Definition \ref{Kflach} that either $h$ or $h'$ 
are of infinite $K$-area (choose $\epsilon := \frac{1}{k}$ with $k = 1,2, \ldots$). This shows that the set of elements of 
finite $K$-area is closed under addition. 
\end{proof} 

\begin{prop} If $f : M \to M'$ is a continuous map, then $f_* : K_0(M) \otimes \Q \to K_0(M') \otimes \Q$ 
restricts to a map between vector subspaces consisting of elements of finite $K$-area. In particular, the vector subspace of 
elements of finite $K$-area in $K_0(M) \otimes \Q$ is an invariant of the homotopy type of $M$. 
\end{prop}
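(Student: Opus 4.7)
The plan is to argue by contrapositive: I will show that if $f_*(h) \in K_0(M')\otimes\Q$ is of infinite $K$-area, then so is $h$. Proposition~\ref{subvector} then gives that the elements of finite $K$-area form a subspace preserved by $f_*$. The key geometric input is the pullback construction on bundles together with a Lipschitz estimate obtained by first smoothing $f$.

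First, I would replace $f$ by a homotopic smooth map $\tilde f: M \to M'$. This is possible by standard smooth approximation since $M'$ is a smooth manifold. Because homotopic maps induce the same map on $K$-homology, we have $f_* = \tilde f_*$, so without loss of generality $f$ itself is smooth. Since $M$ is compact, fixing Riemannian metrics on $M$ and $M'$, the map $\tilde f$ has a finite Lipschitz constant $L \geq 1$. Now suppose $f_*(h)$ has infinite $K$-area, witnessed by the metric on $M'$ and some scale $\ell' > 0$. Set $\ell := \ell'/L$; I claim this choice, together with the chosen metric on $M$, witnesses infinite $K$-area of $h$.

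For this, given $\epsilon > 0$, apply the definition of infinite $K$-area of $f_*(h)$ at scale $\ell'$ with parameter $\epsilon/L$: there is a unital $C^*$-algebra $A$ and a finitely generated Hilbert $A$-module bundle $E' \to M'$ equipped with a holonomy representation $\mathcal{H}'$ which is $(\epsilon/L)$-close to the identity at scale $\ell'$, and such that $\langle [E'], f_*(h)\rangle \neq 0$ in $K_0(A) \otimes \Q$. Define $E := \tilde f^* E'$; since $E'$ is an orthogonal direct summand of a trivial $A$-module bundle, so is $E$, so $E$ is again a finitely generated Hilbert $A$-module bundle. The map $\tilde f$ induces a continuous functor $\tilde f_*: \mathcal{P}_1(M) \to \mathcal{P}_1(M')$, and composition with $\mathcal{H}'$ defines a holonomy representation $\mathcal{H}$ on $E$. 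For any loop $\gamma$ in $M$ based at $x \in M$ with $\ell(\gamma) \leq \ell$, the loop $\tilde f \circ \gamma$ has length at most $L \cdot \ell(\gamma) \leq \ell'$, hence
\[
  \|\mathcal{H}(\gamma) - \id_{E_x}\| = \|\mathcal{H}'(\tilde f \circ \gamma) - \id_{E'_{\tilde f(x)}}\| \leq (\epsilon/L) \cdot \ell(\tilde f \circ \gamma) \leq \epsilon \cdot \ell(\gamma),
\]
so $\mathcal{H}$ is $\epsilon$-close to the identity at scale $\ell$. Finally, naturality of the Kronecker pairing together with the identification $[\tilde f^* E'] = \tilde f^*[E']$ in $\KK$-theory yields $\langle [E], h\rangle = \langle [E'], \tilde f_* h\rangle = \langle [E'], f_* h\rangle \neq 0$, which verifies the infinite $K$-area condition for $h$.

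For the homotopy invariance statement, if $f: M \to M'$ is a homotopy equivalence with homotopy inverse $g$, then by what was just proved both $f_*$ and $g_*$ restrict to the finite-$K$-area subspaces; since $g_* f_* = \id$ and $f_* g_* = \id$ on rational $K$-homology, $f_*$ restricts to a linear isomorphism between them, proving homotopy invariance of the subspace. The main delicate point is step involving the pullback: one needs to know both that $\tilde f^* E'$ remains a finitely generated Hilbert $A$-module bundle with a well-defined holonomy functor and that the Kronecker pairing is natural in the space variable in this Hilbert-module setting; both are standard facts from Kasparov theory once the bundle $E'$ is presented as a projection in a trivial bundle, but they require the smoothing reduction carried out at the start so that the Lipschitz bound is available.
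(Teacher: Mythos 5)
The paper states this proposition without proof, so there is no text to compare against; your job here is to supply the argument, and what you have written is the natural one and is correct. The key points are all present: replacing $f$ by a homotopic smooth map $\tilde f$ (which changes nothing in $K$-homology) so that a finite Lipschitz constant $L$ is available; pulling back the Hilbert $A$-module bundle $E'$ and its holonomy representation along $\tilde f$, which is a legal operation because $\tilde f$ maps piecewise-smooth paths to piecewise-smooth paths and sends a loop of length $\le \ell'/L$ to a loop of length $\le \ell'$; and invoking the naturality of the Kronecker pairing, $\langle \tilde f^*[E'],h\rangle = \langle [E'],\tilde f_*h\rangle$, which is the projection formula for the Kasparov product. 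Combined with Proposition~\ref{subvector}, the contrapositive you prove does exactly what is required, and the homotopy-invariance consequence follows formally. Two small cosmetic remarks: the definition of $\epsilon$-closeness uses a strict inequality, so the chain should read $\|\mathcal{H}(\gamma)-\id\| < (\epsilon/L)\,\ell(\tilde f\circ\gamma) \le \epsilon\,\ell(\gamma)$; and your choice $L\ge 1$ is harmless but unnecessary, since the argument works for any positive Lipschitz constant. You might also note explicitly that the pullback holonomy is continuous because the morphism-space topology on $\mathcal{T}(\tilde f^*E')$ is pulled back from that of $\mathcal{T}(E')$, but this is routine.
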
 

We will return to homological aspects of largeness properties in Section \ref{large_homology}. The 
notion of infinite $K$-area is illustrated by the following examples. 

Assume that $M$ is an oriented manifold of even dimension $2n$ which has infinite $K$-area in the sense of Gromov \cite{Gromov(1995)}. By definition this
means that for each $\epsilon > 0$ there is a finite dimensional smooth Hermitian vector bundle 
$V \to M$  with a Hermitian connection whose curvature form in  $\Omega^2(M; \mathfrak{u}(d))$ (where $d = \rk V$)
has norm smaller than $\epsilon$ and with at least one nonvanishing Chern number. 

Using linear combinations of tensor products and exterior products of $V$ one can show that there is a 
Hermitian bundle $E \to M$ with Hermitian connection whose curvature has norm smaller 
than $C \cdot \epsilon$ (where $C$ is a bound which depends only on $\dim M$) and which satisfies 
\[
    \langle \ch(E)  , {\rm PD}(\mathcal{\hat{A}}(M))  \rangle  \neq 0 \in H_0(M ;\Q)   \, , 
\] 
where ${\rm PD}(\mathcal{\hat{A}}(M))$ is the Poincar\'e dual in $H_{ev}(M ; \Q)$ 
of the $\mathcal{\hat{A}}$-polynomial of $M$.

The precise argument is carried out in \cite{Dav} where the following fact is shown. There is a number 
$N$ depending only on $\dim M$ with the following property: Assume that 
$V \to M$ is a complex vector bundle and assume that 
all bundles $V' \to M$ which may be constructed out of $V$ by at most $N$ operations of the form 
direct sum, tensor product and exterior product satisfy
\[
    \langle \ch(V'),  {\rm PD}(\mathcal{\hat{A}}(M))  \rangle  = 0 \in H_0(M ;\Q) \, . 
\] 
Then all Chern numbers of $V\to M$ are zero.

Considering Hermitian vector bundles as finitely generated Hilbert $\C$-module 
bundles this means in the language of Definition \ref{Kflach} that the class ${\rm PD}(\mathcal{\hat{A}}(M))  \in H_{ev}(M;\Q) $ 
has infinite $K$-area (here we use that the Chern character is compatible with the Kronecker pairing). 
If $M$ is equipped with a spin structure, this element is equal to $\ch([M]_K)$, 
the Chern character applied to the $K$-theoretic 
fundamental class of $M$,  and hence we have shown that under the stated assumptions the class 
$[M]_K$ has infinite $K$-area in our sense. 

By a similar argument one shows that if $M$ has infinite $K$-area in the sense of Gromov, then 
\[
      [M]_H  \in H_{2n}(M;\Q)
\]
has infinite $K$-area, where $[M]_H \in H_{2n}(M ; \Q)$ is the homological fundamental class of $M$.

As a second example, cf.~\cite[Section 4]{HS(2006)}, assume that $M$ is area-enlargeable and that the covers $\overline{M} \to M$ 
in Definition \ref{enlargeable} can always be assumed to be finite. By pulling back a suitable Hermitian bundle 
$V \to S^{2n}$ with connection to 
these covers along the maps $f_{\epsilon} : \overline{M} \to S^{2n}$ and wrapping these 
bundles up to get finite dimensional Hermitian bundles $E \to M$ with small curvature, one 
can show that the classes $[M]_H \in H_{2n}(M;\Q)$ and $[M]_K\in K_0(M) \otimes \Q$ (if $M$ is spin) 
have infinite $K$-area. 

More generally assume that $M^{2n}$ is area-enlargeable with no restriction on the covers 
$\overline{M} \to M$. Then \cite[Proposition 1.5]{HS(2007)} implies that 
the classes $[M]_H$ and $[M]_K$, respectively, have infinite $K$-area. In this case we need infinite dimensional 
bundles $E \to M$ which shows the usefulness of  Definition \ref{Kflach} in the general context 
of Hilbert $A$-module bundles where $A$ is a $C^*$-algebra different from $\C$. 

For later reference we state the last observation seperately.

\begin{prop} \label{enlinfinite}Ê Let $M$ be area-enlargeable and of even dimension. 
Then the $K$-area of $[M]_H$ is 
infinite. If $M$ is equipped with a  spin structure, then also the $K$-area of $[M]_K$ is infinite.  
\end{prop}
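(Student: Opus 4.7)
The plan is to exhibit, for every $\epsilon > 0$, a unital $C^*$-algebra $A$ and a finitely generated Hilbert $A$-module bundle $E \to M$ carrying a smooth Hermitian connection whose curvature has operator norm at most $\epsilon$, such that the Kronecker pairing $\langle [E], h \rangle \in K_0(A) \otimes \Q$ is nonzero, where $h = [M]_K$ in the spin case and $h = \ch^{-1}([M]_H)$ in the oriented case. By Proposition~\ref{altSchick}, such an $E$ has holonomy $(C\epsilon)$-close to the identity at a fixed scale $\ell$, which is exactly the condition of Definition~\ref{Kflach}.

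Fix once and for all a smooth Hermitian bundle $V_0 \to S^{2n}$ with Hermitian connection $\nabla_0$ trivialized outside a small disk $D \subset S^{2n}$ and satisfying $\int_{S^{2n}} \ch(V_0) \neq 0$. Given $\epsilon > 0$, area-enlargeability provides a Riemannian cover $p: \overline{M} \to M$ and a smooth map $f: \overline{M} \to S^{2n}$ of nonzero degree, constant outside a compact set, and $\delta$-contracting on $2$-forms for $\delta := \epsilon / \|F_{\nabla_0}\|$. Then $V := f^* V_0$ with connection $\nabla := f^* \nabla_0$ is a Hermitian bundle on $\overline{M}$ with $\|F_\nabla\| \leq \epsilon$, trivialized outside the compact set $f^{-1}(D)$.

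When $\overline{M} \to M$ is a finite cover, the averaged pushforward of Gromov-Lawson descends $V$ to a finite-rank Hermitian bundle $E$ on $M$ whose curvature is still bounded by a constant multiple of $\epsilon$, and the projection formula immediately yields $\langle [E], [M]_K \rangle = \deg(f) \cdot \langle [V_0], [S^{2n}] \rangle \neq 0$, with the analogous statement for $[M]_H$. For a general cover with deck group $G$, take $A := C^*_{max}(G)$ and apply the construction of \cite[Proposition~1.5]{HS(2007)}: twist the flat Mishchenko-Fomenko bundle $\overline{M} \times_G A \to M$ by the compactly supported $K$-theory class represented by $V$. Finite generation over $A$ is ensured by $V$ being trivialized at infinity; the induced connection on $E$ inherits the curvature bound $\|F_E\| \leq \epsilon$ because the Mishchenko-Fomenko factor is flat.

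The main obstacle is the nonvanishing of $\langle [E], h \rangle \in K_0(A) \otimes \Q$ in the infinite-cover case, where no direct projection formula is available. This is carried out in \cite{HS(2007)} via a Mishchenko-Fomenko index computation showing that a suitable trace on $A$ detects $\deg(f) \cdot \langle [V_0], [S^{2n}] \rangle \neq 0$, and I would invoke that computation directly. The two assertions of the proposition differ only in which fundamental class is paired --- in the spin case the $\hat{A}$-genus contribution is absorbed into $[M]_K$, in the oriented case we pair against $[M]_H$ --- but the mechanism producing the nonzero pairing is the same.
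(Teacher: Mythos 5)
Your proposal follows the same route as the paper's own (sketch of a) proof: for finite covers, pull $V_0 \to S^{2n}$ back along the area-contracting maps $f_\epsilon$ and transfer down to $M$ to obtain finite-rank Hermitian bundles of small curvature with nonzero pairing; for general covers, invoke \cite[Prop.~1.5]{HS(2007)}. This is exactly the two-paragraph discussion the paper gives immediately before the statement, so the approach is the same. Two cautions on the details you supply, though. First, the gloss ``twist the flat Mishchenko--Fomenko bundle $\overline{M} \times_G A$ by the compactly supported $K$-theory class of $V$'' should not be read literally: $V = f^*V_0$ carries no $G$-equivariant structure, so a naive tensor does not descend to a finitely generated Hilbert $A$-module bundle on $M$; the actual HS(2007) construction uses the trivialization of $V$ at infinity to assemble such a bundle in a more indirect way, and controls the holonomy directly rather than through a curvature norm and an infinite-dimensional analogue of Proposition~\ref{altSchick} (which, as stated in the paper, applies only to finite-rank Hermitian bundles). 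Second, I am not sure HS(2007) detects the nonvanishing via a trace on $A$ (that is the mechanism in HS(2008), used in Section~\ref{Novikov}); but since you ultimately defer to \cite[Prop.~1.5]{HS(2007)} as a black box, as the paper itself does, neither point breaks the argument.
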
 

We denote by 
\[
   \alpha : K_0(M) \to K_0(B \pi_1(M))   \stackrel{\mu}{\to}  K_0(C^*_{max} \pi_1(M)) 
\]
the composition of the map induced by the classifying map $M \to B \pi_1(M)$ and 
the assembly map. If $M$ is a spin manifold of even dimension, 
note the equations 
\[
   \alpha(M) = \alpha([M]_K)  
\]
(the left hand side coincides with the Rosenberg index) and - more generally - 
\[
    \alpha(h) =  \langle [E] , h \rangle  \in K_0(C^*_{max} \pi_1(M)) \otimes \Q 
\]
for all $h \in K_0(M) \otimes \Q$ where $E \to M$ is the Mishchenko-Fomenko bundle for $C^*_{max} \pi_1(M)$.

The following is the main result of our paper. 

\begin{thm} \label{Kess} Let $M$ be a closed connected smooth manifold and let 
$h \in K_0(M)\otimes \Q$ be of infinite $K$-area. Then 
\[
    \alpha(h) \neq 0 \in K_0(C^*_{max} \pi_1(X)) \otimes \Q \, . 
\]
\end{thm}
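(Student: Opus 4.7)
I would use an \emph{infinite product construction} over the sequence of almost-flat bundles witnessing infinite $K$-area: assemble them into a single Hilbert module bundle over a large product $C^*$-algebra, pass to a Calkin-type quotient in which the assembled holonomy descends to a genuine unitary representation of $\pi_1(M)$, identify the resulting flat bundle with the pushforward of the Mishchenko--Fomenko bundle, and extract the non-vanishing of $\alpha(h)$ from the six-term exact sequence in $K$-theory.

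\emph{Setup and asymptotic representation.} By Definition \ref{Kflach}, choose $\epsilon_n \downarrow 0$ and, for each $n$, a unital $C^*$-algebra $A_n$ and a finitely generated Hilbert $A_n$-module bundle $E_n \to M$ whose holonomy $\mathcal{H}_n$ is $\epsilon_n$-close to the identity at a fixed scale $\ell$, with $\langle [E_n], h \rangle \neq 0 \in K_0(A_n) \otimes \Q$. Fix $x_0 \in M$ and set $F_n := (E_n)_{x_0}$, $B_n := \End_{A_n}(F_n)$, $B := \prod_n B_n$, $I := \{(b_n)_n \in B : \|b_n\| \to 0\}$ and $Q := B/I$. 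For each $g \in \pi_1(M)$ choose a piecewise smooth loop $\gamma_g$ at $x_0$ of bounded length representing $g$, and set $\rho_n(g) := \mathcal{H}_n(\gamma_g)$. The loop $\gamma_g \gamma_h \gamma_{gh}^{-1}$ is null-homotopic and, on a fixed triangulation of $M$, can be filled by a bounded number of disks whose boundary loops have length at most $\ell$; applying the almost-flatness bound to each such boundary gives constants $C_{g,h}$ with
\[
  \|\rho_n(gh) - \rho_n(g)\rho_n(h)\| \;\leq\; C_{g,h}\, \epsilon_n \;\longrightarrow\; 0.
\]
Therefore the product set-map $\rho = (\rho_n)_n : \pi_1(M) \to B$ descends to a genuine homomorphism $\bar\rho : \pi_1(M) \to U(Q)$, which extends by universality to a $*$-homomorphism $\bar\rho : C^*_{\max} \pi_1(M) \to Q$.

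\emph{Product bundle and $K$-theoretic comparison.} Pushing the Mishchenko--Fomenko bundle forward along $\bar\rho$ produces a flat Hilbert $Q$-module bundle $\mathcal{E}_Q := \widetilde M \times_{\bar\rho} Q$ with $\langle [\mathcal{E}_Q], h \rangle = \bar\rho_* \alpha(h) \in K_0(Q) \otimes \Q$ by naturality of the Kasparov product. On the other hand, viewing each $E_n$ as a rank-one Hilbert $B_n$-module bundle via Morita equivalence and assembling them diagonally yields a finitely generated Hilbert $B$-module bundle $\mathcal{E} \to M$ for which (i) under each factor projection $\pi_n : B \to B_n$ one has $\pi_{n,*}\langle [\mathcal{E}], h \rangle = \langle [E_n], h \rangle \neq 0$ in $K_0(A_n) \otimes \Q$, and (ii) the reduction of the transition cocycle of $\mathcal{E}$ modulo $I$ coincides with the flat cocycle of $\bar\rho$, so the image of $[\mathcal{E}]$ under $K_0(B) \to K_0(Q)$ equals $[\mathcal{E}_Q]$.

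\emph{Conclusion via exactness.} Since $I$ is the $c_0$-direct sum of the $B_n$, we have $K_0(I) \otimes \Q \cong \bigoplus_n K_0(A_n) \otimes \Q$, so the image of $K_0(I) \to K_0(B)$ consists of classes whose factor projections vanish for all but finitely many $n$. By (i), $\langle [\mathcal{E}], h \rangle$ is not of this form, so exactness of the six-term sequence for $0 \to I \to B \to Q \to 0$ forces
\[
  \bar\rho_* \alpha(h) \;=\; \langle [\mathcal{E}_Q], h \rangle \;\neq\; 0 \quad \text{in } K_0(Q) \otimes \Q,
\]
and therefore $\alpha(h) \neq 0$ in $K_0(C^*_{\max}\pi_1(M)) \otimes \Q$.

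\emph{Main obstacle.} The technical crux is constructing $\mathcal{E}$ as a genuine finitely generated Hilbert $B$-module bundle whose formation commutes coherently with the factor projections in (i) and with reduction modulo $I$ in (ii); this requires uniformly Morita-stabilizing the $E_n$ to rank one so that the varying ranks and coefficient algebras can be combined, and verifying that the diagonal holonomy cocycle genuinely becomes the associated-bundle cocycle of $\bar\rho$ in the quotient. Without the asymptotic-homomorphism bound of the second paragraph, the passage from the product set-map $\rho$ to the honest $*$-homomorphism $\bar\rho$ on $Q$ breaks down, and without the flatness identification in (ii) the six-term argument cannot convert the non-trivial pairings $\langle [E_n], h\rangle$ into non-vanishing of $\alpha(h)$.
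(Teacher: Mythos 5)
Your proposal is correct and follows essentially the same route as the paper: the bounded-product algebra, the $c_0$-ideal and quotient $Q$, the descent of the almost-flat holonomies to a genuine unitary representation of $\pi_1(M)$ and hence (by universality) a $*$-homomorphism $C^*_{max}\pi_1(M) \to Q$, the comparison with the Mishchenko--Fomenko bundle, and the exactness argument using $K_0$ of the ideal being the direct sum $\bigoplus_n K_0(A_n)$. The only cosmetic difference is that you produce the representation as an asymptotic homomorphism evaluated on chosen loops and then match it with the quotient cocycle, whereas the paper first assembles the product bundle with uniformly Lipschitz transition functions (its Proposition \ref{prod}) and reads off flatness of the quotient bundle directly --- exactly the technical crux you flag.
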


We note the following implication for the Rosenberg index. 

\begin{cor} \label{cor1}ÊLet $M$ be a closed spin manifold of even dimension whose $K$-theoretic fundamental class has 
infinite $K$-area. Then 
\[
   \alpha(M) \neq 0 \in K_0(C^*_{max} \pi_1(M)) \, . 
\]
In particular, closed even-dimensional 
spin manifolds of infinite $K$-area in the sense of Gromov \cite{Gromov(1995)} have
nonvanishing Rosenberg index. (A similar result holds, if $M$ is odd dimensional.) 
\end{cor}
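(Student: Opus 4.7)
The plan is to reduce both assertions to Theorem \ref{Kess} via the identification $\alpha(M) = \alpha([M]_K)$ recorded just before that theorem. For the first assertion I would simply unwind definitions: since $M$ is a closed even-dimensional spin manifold, its Rosenberg index is the image of the $K$-theoretic fundamental class under $\alpha : K_0(M) \to K_0(C^*_{max}\pi_1(M))$. The hypothesis that $[M]_K$ is of infinite $K$-area lets me apply Theorem \ref{Kess} with $h = [M]_K$, yielding
\[
   \alpha([M]_K) \neq 0 \in K_0(C^*_{max}\pi_1(M)) \otimes \Q,
\]
and a class whose rationalization is nonzero must itself be nonzero, so $\alpha(M) \neq 0$ in $K_0(C^*_{max}\pi_1(M))$.

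For the ``In particular'' statement, the task is to deduce that if $M^{2n}$ is spin and has infinite $K$-area in the sense of Gromov, then $[M]_K$ is of infinite $K$-area in the sense of Definition \ref{Kflach}, so that the first assertion applies. This is essentially the informal discussion preceding Proposition \ref{enlinfinite}, which I would carry out more carefully. Given $\epsilon > 0$, start with a finite-dimensional Hermitian bundle $V \to M$ equipped with a Hermitian connection of curvature norm $< \epsilon$ and at least one nonvanishing Chern number. Using the combinatorial amplification argument of \cite{Dav}, combine $V$ with itself by a bounded (dimension-only) number of direct sums, tensor products and exterior powers to obtain a bundle $E \to M$ whose curvature is bounded by $C \cdot \epsilon$ for a constant $C = C(\dim M)$ and whose Chern character satisfies $\langle \ch(E), \mathrm{PD}(\mathcal{\hat{A}}(M)) \rangle \neq 0$. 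Since $M$ is spin, $\mathrm{PD}(\mathcal{\hat{A}}(M)) = \ch([M]_K)$, and by compatibility of the Chern character with the Kronecker pairing this number equals $\langle [E], [M]_K \rangle$, a nonzero element of $K_0(\C) \otimes \Q$. Proposition \ref{altSchick} then converts the curvature bound into a holonomy that is $(C' \epsilon)$-close to the identity at a fixed scale $\ell$ depending only on $M$. Viewing $E$ as a finitely generated Hilbert $\C$-module bundle, this verifies Definition \ref{Kflach} for $[M]_K$, and the previous paragraph finishes the argument.

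The main obstacle, aside from invoking Theorem \ref{Kess} itself, is the quantitative amplification from a single bundle $V$ with one nonvanishing Chern number to a bundle $E$ whose entire Chern character pairs nontrivially with the prescribed class $\mathrm{PD}(\mathcal{\hat{A}}(M))$; this is the content of \cite{Dav} and is the one nonformal ingredient. The parenthetical odd-dimensional case would be handled through the suspension convention for odd-degree $K$-homology: if $[M]_K \in K_1(M) \otimes \Q$ is of infinite $K$-area, then by definition so is $[M]_K \times [S^1]_K \in K_0(M \times S^1) \otimes \Q$, and a product formula for the assembly map relative to $\pi_1(M \times S^1) = \pi_1(M) \times \Z$ together with the even-dimensional case forces $\alpha(M) \neq 0$ as well.
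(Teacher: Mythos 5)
Your proposal is correct and follows essentially the same route as the paper: the corollary is exactly Theorem \ref{Kess} applied to $h=[M]_K$ together with the identity $\alpha(M)=\alpha([M]_K)$ (nonvanishing rationally implying nonvanishing integrally), and the ``in particular'' clause is the paper's own discussion preceding Proposition \ref{enlinfinite}, i.e.\ the amplification argument of \cite{Dav} plus Proposition \ref{altSchick} showing that Gromov's infinite $K$-area gives $[M]_K$ infinite $K$-area in the sense of Definition \ref{Kflach}. Nothing essential is missing; the odd-dimensional remark via suspension matches the paper's convention.
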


The proof of Theorem \ref{Kess} is based on the construction of ``infinite product bundles'' from 
\cite{HS(2006)}. We shall explain how this construction fits the setting of the paper at hand.  

Let $(E_k)_{k \in \N}$ be a sequence of finitely generated Hilbert $A_k$-module bundles over $M$, where 
$(A_k)$ is a sequence of unital $C^*$-algebras.  We assume 
that the fibre of $E_k$ is isomorphic (as a Hilbert $A_k$-module) to $q_k A_k$ where 
$q_k \in A_k$ is a (self-adjoint) projection. This assumption is important for our construction. In 
general the fibre of $E_k$ is of the form $q \cdot (A_k)^n$ for some $n$ with a projection 
$q \in \Mat(A_k,n)$. In this case we use the same transition functions as for  $E_k$ to construct 
a Hilbert $\Mat(A_k,n)$-module bundle of the required form. By Morita equivalence of 
$A_k$ and $\Mat(A_k,n)$ this does not affect the
 $K$-theoretic considerations relevant for our discussion. 
 
We consider the unital $C^*$-algebra $A$ consisting of norm bounded sequences 
\[    
    (a_k)_{k \in \N} \in \prod_{k=1}^{\infty} A_k   
\]
and wish to construct a Hilbert $A$-module bundle $E \to M$ with fibre $q A$, where $q = (q_k)$ is 
the product of the projections $q_k$, by taking the ``infinite product'' of the bundles $E_k$.  
However, taking the infinite product of the transition functions for the bundles $E_k$ may not 
result in continuous transition functions for the infinite product bundle. 
The following example indeed shows that 
an infinite product construction of this kind may be obstructed by topological properties of the bundles $E_k$.

\begin{ex} Let $E_k \to S^2$ be the complex line bundle with Chern number $k$. 
Assume we have a Hilbert $A$-module bundle $E \to S^2$ over the 
$C^*$-algebra $A = \prod_k  \C$ (which is equal to the standard seperable Hilbert space) 
with typical fibre $V = \prod_k \C$ and Lipschitz continous transition functions in diagonal form so that 
the $k$th component of this bundle is isomorphic to $E_k$ as a complex line bundle. 

Restricting the transition functions  of $E$  to the single factors leads 
to trivializations for the bundles $E_k \to S^2$ whose transition functions have uniformly (in $k)$ 
bounded  Lipschitz constants. This implies that the Euler 
numbers of the bundles $E_k$ are bounded, contrary to our assumption. 
\end{ex}

This example indicates that we need to choose Lipschitz trivializations of the bundles $E_k$ so that the resulting transition functions have uniformly bounded Lipschitz constants. This can be achieved as follows. 

\begin{prop} \label{prod} Assume that each bundle $E_k\to M$ is equipped with a 
holonomy representation $\mathcal{H}_k$ so that $\mathcal{H}_k$ is $\epsilon$-close to the identity at scale 
$\ell$ where the constants $\epsilon$ and $\ell$ are independent of $k$, and $M$ is equipped with a 
fixed Riemannian metric.  
Then there is a finitely generated Hilbert $A$-module bundle 
$V \to M$ with transition functions in diagonal form and so that the $k$th component of this 
bundle is isomorphic to $E_k$ as an $A_k$-Hilbert module bundle.
\end{prop}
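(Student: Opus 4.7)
My plan is to use the holonomy representations $\mathcal{H}_k$ to produce compatible local trivializations of each $E_k$ over a fine open cover of $M$, and then to verify that the resulting transition functions combine diagonally into continuous ${\rm Iso}_A(qA)$-valued transition functions for $V$, where $A = \prod_k A_k$ is the $C^*$-algebra of norm bounded sequences with supremum norm and $q = (q_k) \in A$ is the diagonal projection.

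First, I would fix the given Riemannian metric on $M$ and choose a finite open cover $\{U_i = B_{x_i}(r)\}_{i=1}^N$ by geodesic balls of radius $r$ small enough to lie below the injectivity radius of $M$ and such that every loop arising in the construction below has length at most $\ell$ (for instance, $r < \ell/8$). For each $k$ and each $i$, fix a Hilbert $A_k$-module isomorphism $\tau_i^{(k)} : q_k A_k \cong (E_k)_{x_i}$, and trivialize $E_k|_{U_i}$ by radial parallel transport, $\varphi_i^{(k)}(y,v) = \mathcal{H}_k(\gamma_{x_i \to y})\,\tau_i^{(k)}(v)$, where $\gamma_{x_i \to y}$ denotes the unique radial geodesic in $U_i$. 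The resulting transition functions $g_{ij}^{(k)} : U_i \cap U_j \to {\rm Iso}_{A_k}(q_k A_k)$ are continuous for each fixed $k$, because $\mathcal{H}_k$ is a continuous functor on $\mathcal{P}_1(M)$.

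Now set $g_{ij}(y) := (g_{ij}^{(k)}(y))_k : U_i \cap U_j \to {\rm Iso}_A(qA)$. The cocycle identity $g_{ij}\,g_{jl} = g_{il}$ holds componentwise, and therefore also diagonally. Once the $g_{ij}$ are shown to be continuous with respect to the norm topology on $A$, the standard clutching construction produces a finitely generated Hilbert $A$-module bundle $V \to M$ of fiber $qA$ whose $k$-th component is canonically isomorphic to $E_k$.

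The main technical point is the equicontinuity of the family $\{g_{ij}^{(k)}\}_k$, since this is precisely what makes the diagonal assembly $g_{ij}$ continuous into $A$ with the supremum norm. For nearby points $y, y' \in U_i \cap U_j$, I would express $g_{ij}^{(k)}(y)\,g_{ij}^{(k)}(y')^{-1}$ as a $\tau$-conjugate of the holonomy of $\mathcal{H}_k$ around a short loop based at $x_j$ and passing through $y$, $x_i$, and $y'$. Choosing the comparison path between $y$ and $y'$ carefully, so that the two radial legs to $x_i$ and $x_j$ cancel to leading order as $y' \to y$, the hypothesis $\|\mathcal{H}_k(\gamma) - \id\| \leq \epsilon\,\ell(\gamma)$ for loops of length at most $\ell$ then yields a bound on $\|g_{ij}^{(k)}(y) - g_{ij}^{(k)}(y')\|$ that is independent of $k$ and tends to $0$ as $d(y,y') \to 0$. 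This $k$-independent modulus of continuity is exactly where the uniformity of $\epsilon$ and $\ell$ in $k$ is used, and it is what the preceding line bundle example shows is indispensable for the diagonal construction to succeed.
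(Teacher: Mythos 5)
Your overall strategy coincides with the paper's: cover $M$ by small charts, use the holonomy representations to trivialize each $E_k$ locally, observe that the cocycle identity holds componentwise, and reduce everything to showing that the componentwise transition functions have a modulus of continuity that is uniform in $k$, so that the diagonal map into $A=\prod_k A_k$ with the supremum norm is continuous. You also correctly identify that this uniformity is the whole point (and the moral of the line bundle example). The gap is in the one step you call the main technical point. With radial trivializations, $g_{ij}^{(k)}(y)\,g_{ij}^{(k)}(y')^{-1}$ is indeed a $\tau$-conjugate of $\mathcal{H}_k$ applied to the loop $x_j \to y' \to x_i \to y \to x_j$ built from radial geodesics; but the length of this loop is comparable to $d(x_i,y)+d(x_j,y)$, which does \emph{not} tend to $0$ as $y'\to y$, and nothing "cancels to leading order": the radial geodesics from $x_i$ to $y$ and from $x_i$ to $y'$ diverge immediately at $x_i$, so there is no common segment whose backtracking the functor $\mathcal{H}_k$ could annihilate. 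The hypothesis only bounds $\|\mathcal{H}_k(\gamma)-\id\|$ by $\epsilon$ times the \emph{length} of the loop, so what you actually get is $\|g_{ij}^{(k)}(y)-g_{ij}^{(k)}(y')\| \leq C\,\epsilon\, r$ uniformly in $k$ -- a uniform bound on the oscillation over the overlap, but not a modulus of continuity tending to $0$ with $d(y,y')$. Continuity of each individual $g_{ij}^{(k)}$ comes only from continuity of the functor $\mathcal{H}_k$, which carries no uniformity in $k$; hence equicontinuity, and with it norm continuity of the diagonal transition functions into $A$, is not established by your argument.

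For comparison, the paper does not use geodesic balls with radial transport but closed sets $D_i$ diffeomorphic to cubes, small compared to $\ell$, and trivializes $E_k|_{D_i}$ inductively along the coordinate directions, exactly as in the proof of Proposition \ref{altSchick}; the point of that construction is that it is the abstract analogue of the gauge in which the connection $1$-form is bounded by $C\epsilon$, and the paper then asserts that the resulting transition maps are Lipschitz with constants bounded uniformly in $i$ and $k$, which is what makes the diagonal transition functions continuous. So to repair your proof you need to replace the loop estimate above by an argument producing a genuinely $k$-independent Lipschitz (or at least uniform continuity) bound for the transition functions -- e.g.\ by building the trivializations as in Proposition \ref{altSchick} and quantifying the variation of the transition maps in each coordinate direction -- rather than relying on the length-proportional holonomy bound applied to loops whose length stays bounded away from zero.
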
 

\begin{proof} We start with a cover of $M^n$ by finitely many closed subsets $(D_i)_{i \in I}$ each of which 
is diffeomorphic to the $n$-dimensional unit cube $[0,1]^n \subset \R^n$ and so that the interiors of these subsets 
still cover $M$. The size of each $D_i$ can be assumed to be small compared to $\ell$. 

For each $k$, using the holonomy representation $\mathcal{H}_k$, we trivialize the bundle $E_k$ over each subset $D_i$ 
inductively into each of the $n$ coordinate directions (compare the proof of Proposition \ref{altSchick}).  

This leads to local trivializations of $E_k|_{D_i}$ 
whose transition maps (for fixed $k$, but varying $i$) have uniformly bounded (in $i$ and $k$)
Lipschitz constants. Hence the 
product of these transition maps  can be used to define the Hilbert $A$-module bundle $V \to M$ as 
required. 
\end{proof} 

We remark that the product bundle $V \to M$ is a bundle of finitely generated Hilbert 
$A$-modules isomorphic to $qA$ by our assumption that $E_k$ has typical fibre $q_k A_k$. 

For the proof of Theorem \ref{Kess} we assume that $h \in K_0(M) \otimes \Q$ and that $(E_k)$ is a sequence of 
Hilbert $A_k$-module bundles with fibres $q_k A_k$ so that $\langle [E_k] , h \rangle  \neq 0 \in K_0(A_k) \otimes \Q$ for all $k$. Furthermore, 
we assume that $E_k$ is equipped with a holonomy representation $\mathcal{H}_k$ 
which is $1/k$-close to the identity at some scale $\ell$ which is independent of $k$.  

We consider the Hilbert $A$-module bundle $V \to M$ constructed in Proposition \ref{prod}. 

Starting from  $V$ we can construct various other Hilbert module bundles over $M$ as follows. Let 
\[
   \psi_k : A \to A_k 
\]
denote the projection onto the $k$th component. Moreover, we denote by 
\[
   A' := \bigoplus_{k=1}^{\infty} A_k \subset A  
\]
the closed two sided ideal consisting of sequences in $A$ tending to zero and by 
\[
   Q := A / A' 
\]
the quotient $C^*$-algebra. Finally, let 
\[
   \psi : A \to Q 
\]
be the quotient map. 

We obtain Hilbert $A_k$-bundle isomorphisms
\[
     E_k \cong  V \otimes A_k 
\]
and a Hilbert $Q$-module bundle
\[
    W := V \otimes Q 
\]
with typical fibre $q Q$, where we identify $q \in A$ and its image in $Q$.  

The following fact is crucial

\begin{prop} 
The bundle $W$ has local trivializations with locally constant transition
maps. More precisely, it can be written 
as an associated bundle 
\[
      W = \widetilde{M} \times_{\pi_1(M)} qQ
\]
for some unitary representation $\pi_1(M)  \to \Hom_Q(qQ, qQ)$. 
\end{prop}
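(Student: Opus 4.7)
The plan is to exploit the fact that passing to the quotient $Q = A/A'$ kills precisely those errors in holonomy that vanish asymptotically in $k$. Since $\mathcal{H}_k$ is $(1/k)$-close to the identity at scale $\ell$, the composite holonomy on the infinite product bundle $V$ differs from the identity by a sequence in $A'$ on every sufficiently short loop, so on $W = V \otimes Q$ the holonomy becomes literally trivial on short loops.

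More precisely, I would first observe that the local trivializations constructed in the proof of Proposition \ref{prod} assemble the holonomies $\mathcal{H}_k$ on $E_k$ into a holonomy representation $\mathcal{H}\colon \mathcal{P}_1(M) \to \mathcal{T}(V)$ on the infinite product bundle $V \to M$; indeed, over each coordinate patch $D_i$ the trivialization of $V$ is by construction the product of the trivializations of the $E_k$, and the holonomy of a path is the product (in $A$) of its components. For every closed loop $\gamma$ based at a point $x$ with $\ell(\gamma) \le \ell$, the $k$th component of $\mathcal{H}(\gamma) - \mathrm{id}_{V_x}$ has operator norm at most $(1/k)\cdot\ell(\gamma)$; thus the sequence of component differences tends to $0$ in norm, so it belongs to the ideal $A' \subset A$. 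Consequently the induced holonomy on $W = V \otimes Q$ sends every loop of length $\le \ell$ to the identity of $W_x$.

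Next I would convert this into the desired local trivializations. Choose a finite cover of $M$ by open balls $U_\alpha$ whose radii are small compared to $\ell$, so that every loop contained in any $U_\alpha$ or in any pairwise intersection $U_\alpha \cap U_\beta$ (after joining with a fixed path to a basepoint) has length $\le \ell$. Pick a basepoint $x_\alpha \in U_\alpha$ and define a trivialization $W|_{U_\alpha} \cong U_\alpha \times qQ$ by parallel transport (via $\mathcal{H}$ on $V$, then tensored with $Q$) along any path in $U_\alpha$ from $x_\alpha$ to the target point; independence of the path follows because any two such paths close up to a short loop, on which the holonomy on $W$ is trivial. The transition function on $U_\alpha \cap U_\beta$ is then a locally constant $\Hom_Q(qQ, qQ)$-valued function, because changing the target point in a connected component alters the comparison by the holonomy of a short loop based at $x_\alpha$ or $x_\beta$, which is trivial.

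Finally, a flat bundle of finitely generated Hilbert $Q$-modules with fibre $qQ$ is classified by the holonomy representation of $\pi_1(M, x_0)$ on the fibre $W_{x_0} \cong qQ$, and the standard reconstruction yields $W \cong \widetilde{M} \times_{\pi_1(M)} qQ$. Unitarity of this representation follows because each $\mathcal{H}_k$ takes values in the unitary group $\mathrm{Iso}_{A_k}(E_{k,x}, E_{k,y})$ (Hilbert $A$-module isomorphisms preserve the inner product), so the induced representation lands in the unitary automorphisms of $qQ$. The only real obstacle is verifying the path-independence of the parallel transport on $W$ carefully, i.e.\ that triviality on single short loops implies triviality on the composite loops arising when comparing two paths in an intersection $U_\alpha \cap U_\beta$; this is handled by subdividing the comparison loop into short pieces, using that the holonomy is multiplicative along concatenation and that the scale $\ell$ was built in uniformly.
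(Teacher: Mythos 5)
Your proposal is correct and follows exactly the same line of reasoning as the paper's (very terse) proof: the key observation in both is that $\mathcal{H}_k$ being $(1/k)$-close to the identity at scale $\ell$ forces the product holonomy on $V$ to differ from the identity by an element of $A'$ on every loop of length $\le \ell$, so that the induced holonomy on $W = V \otimes Q$ is literally trivial on short loops and hence on all contractible loops, yielding locally constant transition functions and the associated-bundle description. You simply spell out the reconstruction of the flat bundle and the unitarity check, which the paper leaves implicit.
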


\begin{proof} The family of holonomy representations $(\mathcal{H}_k)$ induces a holonomy 
representation on $W$ which is equal to the identity on each closed loop of length at most $\ell$ in $M$ (and hence 
on contractible loops of arbitrary length), 
because the holonomy representation $\mathcal{H}_k$ is $1/k$-close to the identity at scale $\ell$. 
Using this holonomy representation on $W$ we construct the desired local trivializations of $W$. 
\end{proof}

These facts in combination with naturality properties of Kasparov $\KK$-theory
allow us to show 
that $\alpha(h) \neq 0 \in K_0(C^*_{max} \pi_1(M)) \otimes \Q$. The holonomy representation for the 
bundle $W$ induces an involutive map 
\[
   \pi_1(M) \to \Hom_Q(qQ, qQ) = q Q q
\]
with values in the unitaries of the $C^*$-algebra $q Q q$. Hence, by the universal property 
of $C^*_{max} \pi_1(M)$ we get an induced map of $C^*$-algebras
\[
   \phi :  C^*_{max} \pi_1(M) \to q Q q \hookrightarrow Q \, . 
\]
Note that this step is not possible in general, if we use the reduced $C^*$-algebra 
$C^*_{red} \pi_1(M)$ instead. 
Let $E = \widetilde{M} \times_{\pi_1(M)} C^{*}_{max} \pi_1(M) \to M$ 
be the Mishchenko-Fomenko bundle. 

We study the commutative diagram 
\[
   \xymatrix{
  K_0(M) \ar[d]^{=} \ar[r]^-{\langle [E], - \rangle} & K_0(C^*_{max} \pi_1(M) ) \ar[r]^-{\phi_*} & K_0(Q) \ar[d]^{=} \\
  K_0(M)  \ar[r]^{\langle [V], - \rangle }                 & K_0(A)  \ar[r]^{\psi_*}               & K_0(Q )           \\
        }
\]
The composition 
\[
   K_0(M) \stackrel{\langle [V] , - \rangle }{\longrightarrow} K_0(A) \stackrel{(\psi_k)_*}{\longrightarrow} K_0(A_k) 
\]
sends the element $h$ to $\langle [E_k], h \rangle  \in K_0(A_k)$ which is different from 
zero by assumption. This implies that under the map 
\begin{eqnarray*}
  \chi :  K_0(A) & \to & \prod_k K_0(A_k ) \\
        z &  \mapsto & ( (\psi_k)_*(z))_{k  = 1,2, \ldots} 
\end{eqnarray*}
the element $z : = \langle [V], h \rangle$ is sent to a sequence all of whose components are 
different from zero. We will conclude from this that also $\psi_*(z) \neq 0$ finishing 
the proof of Theorem \ref{Kess}.

Consider the long exact sequence in $K$-theory induced by the short exact sequence
\[
    0 \to A' \to A \to Q \to 0 \, . 
\]
Using the fact that $K$-theory commutes with direct limits we have a canonical 
isomorphism 
\[
    K_0(A') \cong \bigoplus_k K_0(A_k) \, . 
\]
Assume that $\psi_*(z) = 0$. This implies that $\chi$ maps $z$ 
to a sequence $(z_k) \in \prod_k K_0(A_k)$ 
with only finitely many nonzero entries. But this contradicts the calculation that we carried out 
before. Hence $\psi_*(z) \neq 0$.


\section{The strong Novikov conjecture} \label{Novikov} 


The method presented in the previous paragraph can be used to prove a special case of the 
strong Novikov conjecture. Let $G$ be a discrete group and  let $\Lambda^*(G) \subset H^*(BG; \Q)$ be 
the subring generated by $H^{\leq 2}(BG; \Q)$

\begin{thm}[\cite{HS(2008)}]  \label{strnovikov}ÊLet $h \in K_0(BG)\otimes \Q$ be a $K$-homology class with the following 
property: There is a class $c \in \Lambda^*(G)$ so that $\langle c , \ch(h) \rangle  \neq 0 \in H_0(BG;\Q) = \Q $. Then under the assembly map 
\[
    K_0(BG) \otimes  \Q  \to K_0(C^*_{max} G) \otimes \Q 
 \]
the element $h$ is sent to a a non-zero class. 
\end{thm}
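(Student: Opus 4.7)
The strategy parallels the proof of Theorem \ref{Kess} applied to a manifold approximation of $BG$, with the key refinement that the detecting holonomy must factor through $G$ itself rather than through the fundamental group of the approximating manifold.

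First I would realize $h$ geometrically. Since $K$-homology has compact supports, $h$ is supported on a finite subcomplex of $BG$; by a standard Baum-Douglas / transversality argument one may then choose a closed connected smooth manifold $M$, a continuous map $f : M \to BG$ and a class $h_M \in K_0(M) \otimes \Q$ with $f_*(h_M) = h$ and $\langle f^*c,\ \ch(h_M) \rangle \neq 0$ in $H_0(M; \Q)$, after possibly replacing $h$ by a nonzero rational multiple.

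Next comes the main construction: for every $k \in \N$ produce a unital $C^*$-algebra $A_k$ and a finitely generated Hilbert $A_k$-module bundle $E_k \to M$, pulled back along $f$ from a bundle on $BG$, whose holonomy representation is $(1/k)$-close to the identity at a fixed scale $\ell > 0$ and such that $\langle [E_k], h_M \rangle \neq 0$. Because $c \in \Lambda^*(G)$ is a polynomial in classes of degree at most two, it suffices to construct almost-flat pullback bundles witnessing the generators. A degree-two class, after clearing denominators, is the first Chern class of a line bundle on $BG$, and pulling back along $f$ yields a Hermitian line bundle on $M$ whose curvature, while not small directly, can be replaced by the Mishchenko-Fomenko bundle attached to the associated $U(1)$-central extension of $G$, or by its variant for $C^*_{max}\Z^r$, giving small holonomy in the sense of Proposition \ref{altSchick}. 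A degree-one class is handled analogously via the detection of $[S^1]_K$. Forming tensor products and linear combinations of these almost-flat pullbacks preserves the $\epsilon$-close-to-identity property up to a constant depending only on the ranks, and the pairing with $h_M$ is computed through the Chern character to be the nonzero rational number $\langle f^*c, \ch(h_M) \rangle$ up to a positive integer.

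Once this family $(E_k, \mathcal H_k)$ is in hand, I would apply Proposition \ref{prod} verbatim to assemble the $E_k$ into a Hilbert $A$-module bundle $V \to M$ with $A = \prod_k A_k$ and form $W = V \otimes Q$ over $Q = A / \bigoplus_k A_k$. The crucial point, absent from the proof of Theorem \ref{Kess}, is that because every $E_k$ is pulled back along $f : M \to BG$, the induced holonomy on $W$ along any loop in $M$ depends only on its image in $G = \pi_1(BG)$ and is trivial along loops that are nullhomotopic in $BG$; hence it factors through $G$. The universal property of $C^*_{max} G$ then produces a unital $*$-homomorphism $\phi : C^*_{max} G \to Q$, and the same diagram chase as in the proof of Theorem \ref{Kess}, using the six-term sequence attached to $0 \to \bigoplus_k A_k \to A \to Q \to 0$ together with $K_0(\bigoplus_k A_k) \cong \bigoplus_k K_0(A_k)$, shows that the image of $\langle [V], h_M \rangle$ in $K_0(Q) \otimes \Q$ is nonzero, whence $\phi_*(\mu(h)) \neq 0$ and therefore $\mu(h) \neq 0$.

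The main obstacle is the middle paragraph: fabricating pullback bundles from $BG$ whose holonomy is uniformly small and still detects $c$. The degree-two piece is a Hilbert-module version of the classical almost-flat bundles of Connes-Gromov-Moscovici; the degree-one piece genuinely requires the extra freedom of Definition \ref{Kflach}, since no finite-dimensional line bundle on a fixed $M$ with small curvature can carry a nontrivial first Chern class. Once these pullback detectors are constructed, the remainder is exactly the infinite-product / quotient algebra argument already set up in Section \ref{Karea}.
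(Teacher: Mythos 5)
There is a genuine gap, and it sits exactly where you flag it yourself: the middle paragraph. Your argument needs, for every $\epsilon>0$, a finitely generated Hilbert $A$-module bundle on $M$ whose holonomy is $\epsilon$-close to the identity at a fixed scale \emph{and} which still pairs nontrivially with $h_M$, detecting the degree-two class $c$. You propose to get this by ``replacing'' the pulled-back line bundle $L=f^*(c$-bundle$)$ by the Mishchenko--Fomenko bundle of the associated $U(1)$-central extension of $G$ (or of $C^*_{max}\Z^r$). But such a bundle is flat, so smallness of holonomy is not the issue -- detection is: pairing a flat Mishchenko--Fomenko-type bundle with $h_M$ lands you in the $K$-theory of a (twisted) group $C^*$-algebra, and showing that class is nonzero is precisely a statement of Novikov type, i.e.\ the thing being proved. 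No mechanism for extracting a nonzero number from that pairing is given, and without one the construction is circular. The paper's actual device is concrete and different: fix a Hermitian connection on $L$ with curvature $\eta$, trivialize $\pi^*L$ on $\widetilde M$ with connection form $\omega$, and for $0<t<1$ put on $E=\widetilde M\times_G \ell^2(G)$ the $G$-invariant connection that equals $(g^{-1})^*(t\omega)$ on $\widetilde M\times\C\cdot 1_g$; its curvature is bounded by $t\|\eta\|$. Kuiper triviality is evaded by cutting the structure group down to the holonomy algebra $A_t\subset B(\ell^2(G))$, giving a rank-one Hilbert $A_t$-module bundle $E_t$, and -- this is the missing detection step -- the canonical trace $\tau_t(\psi)=\langle\psi(1_e),1_e\rangle$ together with the Chern--Weil calculus yields $\tau_t\bigl(\langle [E_t],[V]\cap[M]_K\rangle\bigr)=\langle \exp(tc),\ch(h)\rangle$, a nonzero polynomial in $t$, so $\langle [E_{1/k}],h_M\rangle\neq 0$ for infinitely many $k$. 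Nothing in your sketch supplies an analogue of this computation, and your parenthetical treatment of degree-one classes and of products in $\Lambda^*(G)$ inherits the same problem.

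Separately, the last third of your argument is unnecessary and introduces an artificial constraint. You insist the $E_k$ be pulled back from $BG$ so that, after rerunning Proposition \ref{prod} and the quotient-algebra argument, the holonomy of $W$ factors through $G$. But the paper arranges (using finite presentability of $G$ and the Baum--Douglas realization) that $f:M\to BG$ induces an isomorphism on $\pi_1$; then $\alpha(h_M)=\mu(f_*h_M)=\mu(h)$, so once $h_M$ is known to have infinite $K$-area in the sense of Definition \ref{Kflach} you can quote Theorem \ref{Kess} as a black box -- no factorization through $BG$ and no second pass through the infinite-product construction is needed (and note the paper's detecting bundles $E_t$ are \emph{not} pulled back from $BG$; they depend on the connection form $\omega$ on $\widetilde M$). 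So the structural reduction in your first and last paragraphs is sound but redundant, while the core construction and its trace/Chern--Weil detection remain to be supplied.
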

As a corollary one obtains the following special case of the classical Novikov conjecture. 

\begin{cor}[\cite{CGM,Mathai}] Let $M$ be a connected closed oriented manifold, let $G$ be a discrete group and let
$f: M \to BG$ be a continuous map. Then for all $c \in \Lambda^*(G)$ the higher signature $\langle \mathcal{L}(M)  \cup f^*(c) , [M] \rangle$ 
is an oriented homotopy invariant, where $\mathcal{L}(M)$ denotes the Hirzebruch $L$-polynomial. 
\end{cor}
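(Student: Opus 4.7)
The plan is to derive the corollary from Theorem \ref{strnovikov} via the $K$-homology class of the signature operator and the Mishchenko-Kasparov symmetric signature construction. For a closed oriented manifold $M^n$, the signature operator canonically determines a class $[D_M] \in K_n(M)$; for clarity I focus on the even-dimensional case, with the odd case handled by the $K_1$ analogue of Theorem \ref{strnovikov} indicated after Definition \ref{Kflach}. A classical computation (essentially Hirzebruch's signature theorem in $K$-theoretic form, derived from Atiyah-Singer applied to the signature complex) identifies
\[
\ch([D_M]) = \mathcal{L}(M) \cap [M]_H \in H_{ev}(M;\Q).
\]

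The essential input beyond Theorem \ref{strnovikov} is the Mishchenko-Kasparov homotopy invariance theorem: for any continuous $f: M \to BG$, the assembled class $\alpha(f_*[D_M]) \in K_0(C^*_{max}G)$ depends only on the oriented homotopy type of the pair $(M,f)$. Consequently, if $(M,f)$ is orientedly homotopy equivalent to $(M',f')$, then the difference
\[
h := f_*[D_M] - f'_*[D_{M'}] \in K_0(BG) \otimes \Q
\]
lies in the kernel of the rational assembly map.

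At this point I would invoke the contrapositive of Theorem \ref{strnovikov}: since $\alpha(h) = 0$, the pairing $\langle c, \ch(h) \rangle$ vanishes for every $c \in \Lambda^*(G)$. Unwinding this using naturality of $\ch$ under pushforward together with the projection formula yields
\[
\langle c, \ch(f_*[D_M]) \rangle = \langle f^*(c), \mathcal{L}(M) \cap [M]_H \rangle = \langle \mathcal{L}(M) \cup f^*(c), [M]_H \rangle,
\]
and the same identity holds for $(M',f')$. Equality of these two expressions is precisely the asserted oriented homotopy invariance of the higher signature associated to $c$.

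The main obstacle is not the logical structure of the argument but the two classical ingredients that must be granted: (i) the Mishchenko-Kasparov invariance of the assembled signature class, which requires realizing the signature operator twisted by the Mishchenko-Fomenko bundle as a $C^*_{max}G$-Fredholm operator and verifying homotopy invariance via Poincar\'e duality in Hilbert module terms; and (ii) the identification $\ch([D_M]) = \mathcal{L}(M) \cap [M]_H$, which reflects the appearance of the $L$-class rather than the $\hat A$-class in the symbol class of the signature operator. Once these standard results are in place, Theorem \ref{strnovikov} is precisely the bridge that converts the $K$-theoretic homotopy invariance into the numerical homotopy invariance of higher signatures for $c \in \Lambda^*(G)$.
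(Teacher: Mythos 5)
Your argument is correct and is essentially the intended deduction: the paper gives no separate proof of this corollary (it simply cites the references), and the standard route you follow --- Mishchenko--Kasparov oriented homotopy invariance of the assembled signature class over $C^*_{max}G$, the contrapositive of Theorem \ref{strnovikov} applied to the difference class $h$, and the Chern character identification for the signature operator together with the projection formula --- is exactly the implicit argument. The only cosmetic caveat is that $\ch([D_M])$ agrees with $\mathcal{L}(M)\cap [M]_H$ only up to degree-dependent powers of $2$, which is harmless here because $\Lambda^*(G)$ is generated by homogeneous classes (so one may work component by component) and these factors are nonzero rational scalars.
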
 

We will establish Theorem \ref{strnovikov} as a fairly straightforward consequence of Theorem \ref{Kess}. It illustrates 
again the flexibility of the notion of 
infinite $K$-area in Definition \ref{Kflach} based on Hilbert module bundles. 
For simplicity we restrict to the case when there is a class $c \in H^2(BG; \Q)$
with $\langle c, \ch(h) \rangle \neq 0$. Furthermore, without loss of generality, we can assume 
that $G$ is finitely presented. The general case follows by applying a direct limit argument.  

Using the description of $K$-homology due to Baum and Douglas \cite{BD}  there is a closed connected spin manifold $M$ 
of even dimension (which can be chosen arbitrarily large) together with 
a finite dimensional complex vector bundle $V \to M$  and 
a continuous map $f : M \to BG$ so that 
\[
    f_* ( [V] \cap [M]_K) = h \, . 
\]
Here we regard again $V \to M$ as an element in $K^0(M)$ and use the cap product pairing 
\[
   \cap : K^0(M) \times K_0(M) \to K_0(M) \, . 
\]
As $G$ is finitely presented we can assume that $f$ induces an isomorphism of fundamental groups. 
In view of Theorem \ref{Kess} we need to show that the class $[V] \cap [M]_K \in K_0(M)$ is 
of infinite $K$-area. 

Let $L \to M$ be the complex line bundle classified by $f^*(c)$. 
We pick a Hermitian connection on $L$ and denote by 
$\eta \in \Omega^2(M; i\R)$ the associated curvature form. Because the universal cover of $BG$ is contractible, the 
pull back $\pi^*(L) \to \widetilde M$ of $L$ to the universal cover $\pi  : \widetilde M  \to M$ is trivial. We fix a trivialization and denote the $1$-form 
associated to the pull back connection by $\omega \in \Omega^1( \widetilde M ; i \R)$. The curvature form $\pi^*(\eta)$ is 
equal to $ d\omega$, since  $\U(1)$ is 
abelian. However, the connection $1$-form $\omega$ is in general not invariant under the action of the 
deck transformation group on $\widetilde M$, because in this 
case the curvature form $\eta$  would be exact and hence $L \to M$ would be the trivial line bundle. 

We will now ``flatten'' the bundle $L \to M$ by scaling its curvature by a constant $0 < t < 1$. 
Unfortunately, this cannot be done directly, because the 
first Chern class of $L$ would no longer be integral. 

The following construction  originating from \cite{HS(2008)} gives a solution to this problem by considering infinite dimensional 
bundles. At first we consider the Hilbert space bundle 
\[
    E = \widetilde M \times_{G} l^2(G) \to M 
\]
where $l^2(G)$ is the set of square summable complex valued functions on $G$ and $G$ acts on the 
left of $l^2(G)$ by the formula 
\[
   ( \gamma \psi ) (x) = \psi ( x  \gamma ) 
\]
and on the right of $\widetilde M$ by $(x,g) \mapsto g^{-1} x$. 
Let $0 < t < 1$. We consider the $G$-invariant connection $1$-form on $\widetilde M \times l^2(G)$ which on the subbundle 
\[
    \widetilde M \times \C \cdot 1_g \subset \widetilde M \times l^2(G) 
\]
concides with $(g^{-1})^*(t\omega)$. Here $1_g \in l^2(G)$ is the characteristic function of $g \in G$. 
Because this one form is $G$-invariant, 
we obtain an induced connection $\nabla^t$ on 
the Hilbert space bundle $E$ whose curvature form is norm bounded by $t \cdot \| \eta \|$. In other words, 
the Hilbert space 
bundle $E$ can be equipped with holonomy representations which are arbitrarily close to the identity
(at some fixed scale). It hence remains to show that $E$  detects the $K$-homology class $ [V] \cap[M]_K$. 

However, by Kuiper's theorem, any Hilbert space bundle is trivial. Therefore we will first 
reduce the structure group of $E$ in a canonical way. This will result in  
finitely generated Hilbert $A_t$-module bundles $E_t \to M$ with appropriate unital $C^*$-algebras 
$A_t$, where $t \in (0,1]$. 
The algebras $A_t$ will depend on $t$. 

We fix a base point $p \in M$ and choose a point $q \in \widetilde M$ above $p$. The fibre over $p$ is then identified with the Hilbert space 
$l^2(G)$. Now we define 
\[
   A_t \subset B(l^2(G)) 
\]
as the norm-linear closure of all maps $l^2(G) \to  l^2(G)$ arising from parallel transport with respect to $\nabla^t$ along piecewise 
smooth loops in $M$ based at $p$. We furthermore define a bundle $E_t \to M$ whose fibre over $x \in M$ is given by the norm-linear closure  
in $\Hom(E|_p, E|_x)$ of all Hilbert space isomorphisms $E|_p \to E|_x$ aring from parallel transport with respect to $\nabla^t$ along 
piecewise smooth curves connecting $p$ with $x$. In this way we obtain, for each $t \in (0,1]$,  a free Hilbert $A_t$-module 
bundle of rank $1$ where the $A_t$-module structure on each fibre is induced by precomposition with parallel transport along 
piecewise smooth loops based at $p$. 

Now, on the one hand, parallel transport with respect to $\nabla^t$ induces a holonomy representation on $E_t \to M$ which, for small enough $t$, is arbitrarily closed to the identity (at a fixed scale which is independent of $t$).  

On the other hand, each of the algebras $A_t$ carries a canonical trace 
\[
   \tau_t : A_t \to \C \, , ~\tau_t(\psi) = \langle \psi(1_e), 1_e \rangle
\]
where  $1_e \in l^2(G)$ is the characteristic function of the neutral element $e \in G$ and $\langle - , - \rangle$ is the 
inner product on $l^2(G)$. For details we refer to \cite[Lemma 2.2]{HS(2008)}.
Using the Chern-Weil calculus from \cite{Schick(2005)} we obtain 
\[
      \tau_t (\langle [E_t] , [V] \cap [M]_K  \rangle) = \langle \exp(tc) , \ch (h) \rangle \in \R[t] \, . 
\]
See also \cite{HS(2008)}. The last polynomial is nonzero by our assumption $\langle c, \ch(h) \rangle \neq 0$. 
In particular, for infinitely many $k \in \N$ we have 
\[
     \langle [E_{1/k}] , [V] \cap [M]_K  \rangle \neq 0 \in K_0(A_{1/k}) \otimes \Q \, . 
\]
This implies that $[V] \cap [M]_K$ is a class of infinite $K$-area and together with Theorem \ref{Kess} finishes the proof of Theorem \ref{strnovikov}. 


\section{Homological invariance of essentialness} \label{large_homology} 


Recall from Definition \ref{homess} that a closed oriented manifold $M^n$ is called {\em essential}, if the classifying map $\phi : M \to B \pi_1(M)$
satisfies 
\[
    \phi_*([M]_H) \neq 0 \in H_n(B \pi_1(M) ; \Q) \, . 
\]

Essential manifolds obey Gromov's systolic inequality:

\begin{thm}[\cite{Gromov(1983)}]ÊLet $M$ be an essential Riemannian manifold of dimension $n$. 
Then there is a noncontractible loop $\gamma : [0,1] \to M$ satisfying
\[
   \ell(\gamma) \leq C(n)  \cdot \vol(M)^{1/n} 
\]
where the constant $C(n)$ depends only on $n$. 
\end{thm}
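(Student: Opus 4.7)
The plan is to introduce Gromov's \emph{filling radius} as an intermediary and split the estimate into a topological step (which uses essentialness) and a metric step (which is purely Riemannian). Embed $M$ isometrically into the Banach space $L^\infty(M)$ via the Kuratowski map $p \mapsto d_M(p,\cdot)$, and define
\[
\FillRad(M) := \inf\bigl\{\varepsilon > 0 : \iota_*[M]_H = 0 \in H_n(U_\varepsilon(M);\Q)\bigr\},
\]
where $U_\varepsilon(M)$ is the open $\varepsilon$-neighborhood of $M$ in $L^\infty(M)$ and $\iota$ is the inclusion. Write $\operatorname{sys}(M)$ for the infimum of lengths of noncontractible loops in $M$. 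The theorem then follows by combining the two inequalities
\[
\operatorname{sys}(M) \leq 6\, \FillRad(M) \qquad\text{and}\qquad \FillRad(M) \leq c(n)\, \vol(M)^{1/n}.
\]

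For the first (topological) inequality, I would argue by contradiction: assume $\FillRad(M) < s/6$ where $s := \operatorname{sys}(M)$. Then $[M]_H$ bounds a singular rational chain $W$ supported in $U_{s/6}(M)$. Triangulate $W$ so finely that each simplex has $L^\infty$-diameter much less than $s/6$, and extend the classifying map $\phi : M \to B\pi_1(M)$ (already defined on the portion $M \subset \partial W$) skeleton by skeleton. On new vertices choose nearest points in $M$ (at distance less than $s/6$); on each new edge the associated loop in $M$ has length strictly less than $s$, hence is nullhomotopic, so the extension exists on the $1$-skeleton. Because $B\pi_1(M)$ is aspherical, all higher obstructions vanish automatically and we obtain $\widetilde\phi : W \to B\pi_1(M)$. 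Then $\phi_*[M]_H = \partial_*\widetilde\phi_*[W] = 0$ in $H_n(B\pi_1(M);\Q)$, contradicting essentialness.

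The main obstacle is the second, purely metric inequality $\FillRad(M) \leq c(n)\vol(M)^{1/n}$, which is Gromov's filling radius estimate from \cite{Gromov(1983)}. I would cite rather than reprove it, since its argument is delicate and logically independent of essentialness. The strategy there is to construct an explicit $n$-chain in $L^\infty(M)$ whose boundary represents $[M]$ and which lies inside a neighborhood of $M$ of radius bounded by a dimensional constant times $\vol(M)^{1/n}$; this is carried out by assembling simplices spanned by maximal $\varepsilon$-separated nets in $M$ and controlling their total mass by Vitali-type covering arguments together with Bishop-like volume comparisons for balls in $M$.

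Combining the two inequalities gives $\operatorname{sys}(M) \leq 6 c(n)\, \vol(M)^{1/n}$. Since $M$ is closed and essentialness forces $\pi_1(M) \neq 1$, the systole is attained by a noncontractible loop $\gamma$, which then satisfies the required bound with $C(n) := 6 c(n)$.
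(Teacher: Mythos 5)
The paper itself offers no proof of this theorem --- it is quoted directly from Gromov's \emph{Filling Riemannian manifolds} --- so your write-up is in effect a reconstruction of the cited argument, and it is the correct one: the decomposition $\operatorname{sys}(M)\leq 6\,\FillRad(M)$ plus $\FillRad(M)\leq c(n)\vol(M)^{1/n}$ is exactly how Gromov proves it, and citing the second (hard, purely metric) inequality while proving the first is a legitimate division of labor. Two small remarks. First, in the topological step the null-homotopy condition is not checked on edges but on $2$-simplices: a map to $B\pi_1(M)$ extends over the $1$-skeleton with no condition at all (send new vertices to $\phi$ of nearest points, edges to $\phi$ of geodesics), and the obstruction appears when filling a $2$-simplex, whose boundary loop in $M$ has length at most $3(2\varepsilon+\delta)<\operatorname{sys}(M)$ when $\varepsilon<\operatorname{sys}(M)/6$ and the mesh $\delta$ is small --- this is where the constant $6$ comes from; higher skeleta are then free by asphericity of $B\pi_1(M)$, exactly as you say. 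Second, your parenthetical description of how the filling inequality is proved is not accurate (there are no curvature hypotheses, so nothing ``Bishop-like'' is available; Gromov's argument runs through the Federer--Fleming deformation technique, a cone/induction scheme and the coarea inequality), but since you cite rather than reprove that inequality this does not affect the validity of your proof.
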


We show the following implication. 

\begin{thm} \label{thm2} Let $M$ be an oriented manifold of even dimension $2n$. If the class $[M]_H \in H_{2n}(M;\Q)$ has 
infinite $K$-area, then $M$ is essential. 
\end{thm}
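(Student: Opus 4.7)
The plan is to invoke Theorem \ref{Kess} and then transport the conclusion from $K$-homology back to ordinary rational homology via the Chern character. Set $h := \ch^{-1}([M]_H) \in K_0(M) \otimes \Q$; since $M$ is a closed manifold and hence a finite CW complex, the Chern character is a rational isomorphism on $M$, so $h$ is well-defined, and by Definition \ref{Kflach} the hypothesis that $[M]_H$ has infinite $K$-area is precisely the statement that $h$ has infinite $K$-area. First I would apply Theorem \ref{Kess} to conclude $\alpha(h) \neq 0$ in $K_0(C^*_{max} \pi_1(M)) \otimes \Q$. Since $\alpha$ is by construction the composition of the classifying-map pushforward $\phi_* : K_0(M) \to K_0(B\pi_1(M))$ with the Baum--Connes assembly map, this immediately forces $\phi_*(h) \neq 0$ in $K_0(B\pi_1(M)) \otimes \Q$.

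Next, by naturality of the Chern character,
\[
\ch(\phi_*(h)) = \phi_*(\ch(h)) = \phi_*([M]_H) \in H_{ev}(B\pi_1(M);\Q),
\]
using $\ch(h) = [M]_H$ by construction of $h$. To finish, it suffices to show that the rational Chern character $\ch : K_0(B\pi_1(M)) \otimes \Q \to H_{ev}(B\pi_1(M);\Q)$ is injective. This is classical for finite CW complexes, and since both $K_0(-)\otimes \Q$ and $H_{ev}(-;\Q)$ commute with the filtered colimit over finite subcomplexes, the rational isomorphism extends to the (in general infinite-dimensional) CW complex $B\pi_1(M)$. Equivalently, since $M$ is compact the map $\phi$ factors through a finite subcomplex $Y \subset B\pi_1(M)$, and after possibly enlarging $Y$ to a finite $Y' \supset Y$ so that the relevant class stabilizes, one may apply the finite-CW Chern character isomorphism directly on $Y'$.

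Combining these steps yields $\phi_*([M]_H) \neq 0$ in $H_{ev}(B\pi_1(M);\Q)$; since $[M]_H$ lies in degree $2n$ so does its pushforward, giving $\phi_*[M]_H \neq 0 \in H_{2n}(B\pi_1(M);\Q)$, which is exactly essentialness in the sense of Definition \ref{homess}. The only mildly delicate step is the Chern character argument on the possibly infinite-dimensional classifying space; this is handled routinely by passing to finite subcomplexes, while everything else is a direct application of Theorem \ref{Kess}.
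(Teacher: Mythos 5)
Your proposal is correct and follows essentially the same route as the paper: the paper's proof is exactly the factorization $\langle [E],-\rangle = \mu\circ\phi_*$ together with the naturality and rational bijectivity of the Chern character, packaged into a commutative diagram, with Theorem \ref{Kess} supplying the nonvanishing. Your extra discussion of why $\ch$ is rationally injective on the possibly infinite complex $B\pi_1(M)$ (colimit over finite subcomplexes) merely fills in a point the paper leaves implicit by labelling that arrow an isomorphism.
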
 

\begin{proof} Let $E \to M$ be the Mishchenko-Fomenko bundle. 
The proof of Theorem \ref{thm2} is based on the  commutative diagram 
\[
    \xymatrix{
    K_0(M)  \otimes \Q     \ar[d]^{=} \ar[rr]^-{\langle [E] , - \rangle}  &                                   &    K_0(C^*_{max} \pi_1(M) ) \otimes \Q \ar[d]^{=}  \\
    K_0(M) \otimes \Q     \ar[r]^-{\phi_*} \ar[d]^{\ch}_{\cong}                           &  K_0(B \pi_1(M))  \otimes \Q   \ar[r]^{\mu} 
    \ar[d]^{\ch}_{\cong} &     K_0(C^*_{max} \pi_1(M) )  \otimes \Q     \\
    H_{ev}(M; \Q)             \ar[r]^-{\phi_*}                                  &  H_{ev}(B \pi_1(M), \Q)                  &                                                                       }
 \]
Indeed, by Theorem \ref{Kess} the image of $\ch^{-1}([M]_H)$ under the map in the first line is non-zero. 
\end{proof} 

This theorem  implies  

\begin{itemize}
  \item Closed manifolds of infinite $K$-area in the sense of Gromov are essential.
 \item  (\cite{HS(2006), HS(2007)}) Area-enlargeable manifolds are essential (use 
            Proposition \ref{enlinfinite}).   
\end{itemize}

The second implication can be obtained without referring to $K$-theoretic considerations. 
This is carried out in \cite{BrunnHan}, where
several largeness properties of Riemannian manifolds are investigated from a 
purely homological point of view. The best results can be obtained for 
enlargeable manifolds, for which we have the following homological invariance result. 

\begin{thm}[\cite{BrunnHan}] \label{BrunnH} Let $G$ be a finitely presented group. Then there is 
a rational vector subspace
\[
     H^{sm}_*(BG; \Q) \subset H_*(BG; \Q)  
\]
with the following property:
Let $M$ be a closed oriented manifold of dimension $n$. Then $M$ is 
enlargeable, if and only if under the classifying map $\phi: M \to B \pi_1(M)$ we have 
\[
   \phi_*([M]) \notin H_n^{sm}(B\pi_1(M) ;\Q) 
\]
\end{thm}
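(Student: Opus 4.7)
The plan is to take the tautological definition
\[
H^{sm}_n(BG;\Q) \;:=\; \mathrm{span}_{\Q}\bigl\{\phi_*([N]_H) : N \text{ closed oriented non-enlargeable } n\text{-manifold},\; \phi : N \to BG\bigr\},
\]
which is manifestly a $\Q$-vector subspace of $H_n(BG;\Q)$, graded in the obvious way. With this choice, the direction ``$M$ non-enlargeable $\Rightarrow$ $(\phi_M)_*([M]_H)\in H^{sm}_n$'' is immediate: take $N = M$ and $\phi = \phi_M$ in the defining spanning set.

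For the converse, assume $(\phi_M)_*([M]_H) = \sum_i q_i\,(\phi_i)_*([N_i]_H)$ in $H_n(B\pi_1(M);\Q)$ with the $N_i$ non-enlargeable. I would first lift this identity from rational homology into rational oriented bordism over $BG$ via the Conner-Floyd isomorphism
\[
\Omega^{SO}_*(BG) \otimes \Q \;\cong\; H_*(BG;\Q) \otimes_{\Q} \bigl(\Omega^{SO}_*(\mathrm{pt}) \otimes \Q\bigr).
\]
Thus the difference $[M,\phi_M] - \sum_i q_i [N_i,\phi_i]$ in $\Omega^{SO}_n(BG)\otimes \Q$ lies in the ``lower-degree'' summand $\bigoplus_{k>0} H_{n-4k}(BG;\Q) \otimes \Omega^{SO}_{4k}(\mathrm{pt})\otimes \Q$. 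Clearing denominators produces a positive integer $K$ such that $K\cdot[M,\phi_M]$ is oriented bordant over $BG$ to a disjoint union $\bigsqcup_i K_i N_i \sqcup \bigsqcup_j (P_j \times L_j)$, where each $L_j$ is a positive-dimensional product of even complex projective spaces $\C P^{2m}$.

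Next I would observe that every correction term $P_j \times L_j$ is non-enlargeable: since $L_j$ carries a positive scalar curvature metric and is simply connected, the product admits positive scalar curvature after warping, hence by Theorem \ref{gromov-lawson} is not enlargeable. Therefore $K$ disjoint copies of $(M,\phi_M)$ are oriented bordant over $BG$ to a disjoint union of non-enlargeable manifolds, all representing a common class in $\Omega^{SO}_n(BG)$.

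The main obstacle is the final descent from this bordism-theoretic equality to non-enlargeability of $M$ itself, since enlargeability is not automatically a bordism invariant. The plan is to invoke a surgery principle for enlargeability in the spirit of Gromov-Lawson: codimension $\geq 3$ surgery respecting the $\pi_1$-structure (and hence the map to $BG$) preserves non-enlargeability, as does connected sum with simply connected positive scalar curvature pieces. Decomposing the bordism over $BG$ into such surgeries, one concludes that $K \cdot M$ is non-enlargeable; since $M$ is enlargeable if and only if any finite disjoint union of copies of $M$ is, this yields non-enlargeability of $M$. Establishing and correctly applying this surgery invariance over $BG$ is the delicate geometric core of the argument.
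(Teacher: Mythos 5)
There is a genuine gap, and it sits exactly at the step you yourself flag as ``the delicate geometric core''. With your tautological definition of $H^{sm}_n$, the hard direction is to show that if $\phi_*([M]_H)$ is a rational combination of classes $(\phi_i)_*([N_i]_H)$ with $N_i$ non-enlargeable, then $M$ itself is non-enlargeable; in other words, that (non-)enlargeability depends only on the image of the fundamental class in $H_*(BG;\Q)$. That is precisely the content of the theorem, and your proposed descent through rational oriented bordism does not establish it. A bordism over $BG$ between $K\cdot M$ and the union of the $N_i$ and correction terms decomposes into handle attachments of \emph{all} indices, hence into surgeries of codimension $\le 2$ as well as $\ge 3$; the Gromov--Lawson surgery principle (which is a theorem about positive scalar curvature, not about enlargeability) has no available analogue asserting that codimension $\ge 3$ surgeries, let alone the codimension $\le 2$ ones forced by the bordism, preserve non-enlargeability, and the maps $\phi_i: N_i \to BG$ are arbitrary rather than $2$-equivalences, so even the handle-trading tricks used in the psc bordism theorems do not apply. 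There are also smaller problems: your argument that $P_j\times L_j$ is non-enlargeable is backwards --- Theorem \ref{gromov-lawson} says (area-)enlargeable \emph{spin} manifolds admit no psc metric, so ``admits psc'' does not imply ``non-enlargeable'' for these (generally non-spin) products; this particular fact is true but needs a direct argument (an $\epsilon$-Lipschitz map crushes each bounded $L_j$-slice into a convex ball, so the map is properly homotopic to one factoring through a lower-dimensional space and has degree zero). Finally, disjoint unions and multiples $K\cdot M$ are not connected closed manifolds, so ``enlargeable'' has to be re-defined for them before one can even state the descent.

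The paper's source \cite{BrunnHan} avoids all of this by a different mechanism: enlargeability is defined directly for a homology class $h$ in a complex $C$ with finitely generated $\pi_1$ (Definition \ref{enlhom}), via covers of a finite subcomplex carrying $h$ and $\epsilon$-Lipschitz maps to $S^n$ moving the transfer of $h$ to a nonzero class; one then proves that this condition is independent of the chosen carrier and is invariant under maps inducing $\pi_1$-isomorphisms, and that the non-enlargeable classes form a rational subspace. Setting $H^{sm}_*(BG;\Q)$ equal to the non-enlargeable classes, the manifold statement follows because $M$ is enlargeable exactly when its fundamental class is enlargeable with $M$ itself as carrier. If you want to pursue your bordism route you would have to prove a bordism-invariance theorem for enlargeability over $BG$ from scratch, which is not easier than the homological argument it is meant to replace.
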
 

This result indeed implies that enlargeable manifolds are essential, because $0 \in H_n(B\pi_1(M) ; \Q)$ 
is contained in every vector subspace of $H_n(B\pi_1(M) ; \Q )$. 

Theorem \ref{BrunnH} can be seen as a form of homological invariance of enlargeability.  
The proof is based on the following definition of enlargeable homology classes in simplicial complexes.

\begin{defn}[\cite{BrunnHan}] \label{enlhom}Ê Let $C$ be a connected simplicial complex
with finitely generated fundamental group. A homology class  $h \in H_n(C; \Q)$ 
is called {\rm enlargeable}, if the following holds: Let 
$S \subset C$ be a finite subcomplex carrying $h$ and inducing a surjection on $\pi_1$. Then, for
every $\epsilon > 0$, there is a cover $\overline{C} \to C$ and an $\epsilon$-Lipschitz map 
$\overline{S} \to S^n$ which is constant outside a compact subset of $\overline{S}$ and sends the transfer
 ${\rm tr}(h) \in H^{l\! f}_n(\overline S ; \Q)$ in the locally finite homology of $\overline S$ to 
 a nonzero class in the reduced homology $\widetilde H_n( S^n ; \Q)$. 
Here $\overline{S}$ is the preimage of $S$ under the covering map $\overline{C} \to C$. 
\end{defn}

It is shown in  \cite{BrunnHan} that the condition for $c$ described in this definition is 
independent of the finite subcomplex $S \subset C$ carrying $c$ and inducing a surjection on $\pi_1$. 
Using this property it is not difficult to prove the following fact, see \cite[Prop. 3.4.]{BrunnHan}.  

\begin{prop}  Let $f: C \to D$ be a continuous map inducing an isomorphism 
of (finitely generated) fundamental groups. Then a class $h \in H_*(C; \Q)$ is 
enlargeable, if and only if the class $f_*(h) \in H_*(D; \Q)$ is enlargeable. 
\end{prop}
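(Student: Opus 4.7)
I would prove the proposition by first reducing to the case of a subcomplex inclusion via the mapping cylinder, and then exploiting the independence of the chosen carrying subcomplex (noted just before the proposition) to match the witnessing data verbatim on the two sides.

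First I would factor $f$ as $C \hookrightarrow M_f \stackrel{r}{\to} D$, where $M_f$ is the simplicial mapping cylinder, the first arrow $\iota$ is a subcomplex inclusion, and $r$ is the deformation retraction onto $D \subset M_f$. Since $f_*$ and $r_*$ are $\pi_1$-isomorphisms, so is $\iota_*$. The section $j : D \hookrightarrow M_f$ of $r$ is also a subcomplex inclusion with $j_*$ an isomorphism on $\pi_1$. Because $r \circ j = \mathrm{id}_D$ and $j_* r_* = \mathrm{id}_{H_*(M_f)}$, it would suffice to prove the proposition separately for the two subcomplex inclusions $\iota$ and $j$; this reduces the statement to the following claim: \emph{if $i: X \hookrightarrow Y$ is an inclusion of a subcomplex inducing an isomorphism on $\pi_1$, then $h \in H_*(X;\Q)$ is enlargeable iff $i_*(h)$ is enlargeable.}

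To prove this claim I would pick a finite subcomplex $S \subset X$ that carries $h$ and induces a surjection on $\pi_1(X)$; the same $S$, viewed in $Y$, carries $i_*(h)$ and surjects onto $\pi_1(Y)$ through $i_*$. By the cited independence result, enlargeability of each of $h$ and $i_*(h)$ can be tested on this one $S$. The $\pi_1$-isomorphism sets up a bijection between connected covers of $X$ and of $Y$: a cover $\overline Y \to Y$ corresponding to $H \leq \pi_1(Y)$ pulls back under $i$ to the cover $\overline X := i^*(\overline Y)$ corresponding to $i_*^{-1}(H)$. Because $\pi_1(S)$ surjects onto $\pi_1(X) \cong \pi_1(Y)$ with a single kernel, the preimages $\overline S \subset \overline X$ and $\overline S \subset \overline Y$ are canonically identified as covers of $S$ via the natural lift of $i$. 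Under this identification the induced metrics and the transfer classes $\mathrm{tr}(h), \mathrm{tr}(i_*(h)) \in H_*^{l\!f}(\overline S; \Q)$ coincide. Hence the sets of admissible data (covers together with $\epsilon$-Lipschitz maps $\overline S \to S^n$ that are constant off a compact set and detect the transfer) for $h$ and for $i_*(h)$ are literally the same, and the claim follows.

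The step I expect to require the most care is the canonical identification of the restricted covers $\overline S \subset \overline X$ and $\overline S \subset \overline Y$, together with the compatibility of their metrics and transfer classes, via the natural lift of $i$: this rests on a careful comparison of the two cover structures and on naturality of the transfer in locally finite homology. Once that naturality is in place, the remainder of the argument is essentially formal.
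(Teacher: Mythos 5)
Your proposal is correct and follows exactly the route the paper indicates: the statement is deduced from the independence of the enlargeability condition on the choice of the carrying subcomplex $S$, with the mapping-cylinder factorization and the covering-space bookkeeping supplying the details that the paper delegates to \cite[Prop.~3.4]{BrunnHan}. The only point you leave implicit is replacing $f$ by a simplicial approximation before forming the simplicial mapping cylinder, which is harmless since $f_*(h)$ is unchanged.
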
 

From this Theorem \ref{BrunnH} follows, if we define $H_n^{sm}(BG ; \Q)$ as the 
subset consisting of all homology classes which are not enlargeable.

Theorem \ref{BrunnH} transforms the problem of determining enlargeable manifolds to 
a problem in group homology: Given a finitely generated group $G$, determine $H_*^{sm}(BG; \Q)$, the ``small''
group homology of $G$. In light of Theorem \ref{BrunnH} and the fact that the 
fundamental classes of enlargeable manifolds 
are of infinite $K$-area (see Proposition \ref{enlinfinite})  it 
is desirable to decide whether $H^{sm}_*(BG; \Q)$ can be non-zero. 
 This is answered in the positive in \cite[Theorem 4.8]{BrunnHan} by use of the Higman $4$-group \cite{Hig}.  
Together with Theorem \ref{BrunnH} this 
implies that there are essential manifolds which are not enlargeable, see \cite[Theorem 1.5]{BrunnHan}.  

In contrast to these positive results we do 
not know, whether there are essential manifolds which 
are not area-enlargeable. These manifolds would exist, if the following question had
an affirmative answer.

\begin{quest} \label{final_problem}  Is there an essential manifold whose fundamental class 
in singular homology $[M]_H$ 
is of finite $K$-area?
\end{quest}

\section{Rosenberg index and the reduced group $C^*$-algebra}

Let $M^n$ be a closed spin manifold. The method of Section \ref{index} can be used 
equally well to construct an index obstruction to positive scalar curvature
\[
  \alpha(M) \in K_n(C^*_{red} \pi_1(M)) \, . 
\]

The reduced group $C^*$-algebra does not share the universal property of the maximal group $C^*$-algebra which we used in the proof of Theorem \ref{Kess}. 

Exploiting the connection of $C^*_{red} \pi_1(M)$ to coarse geometry \cite{HR} we can still prove

\begin{thm}[\cite{HKRS(2007)}] \label{coarse} Let $M^n$ be an enlargeable spin manifold. Then 
\[
   \alpha(M) \neq 0 \in K_n(C^*_{red} \pi_1(M)) \, . 
\]
\end{thm}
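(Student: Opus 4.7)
The plan is to mirror the structure of the proof of Theorem \ref{Kess}, with the now-unavailable universal property of $C^*_{red}\pi_1(M)$ replaced by a $\pi_1(M)$-equivariant coarse geometric construction on the universal cover. Let $\pi : \widetilde M \to M$ be the universal cover with the lifted Riemannian metric, so that $\pi_1(M)$ acts on $\widetilde M$ freely, properly and cocompactly by isometries. The $\pi_1(M)$-equivariant Roe algebra $C^*(\widetilde M)^{\pi_1(M)}$ is Morita equivalent to $C^*_{red}\pi_1(M)$, and under the induced isomorphism
\[
  K_n(C^*(\widetilde M)^{\pi_1(M)}) \cong K_n(C^*_{red}\pi_1(M))
\]
the Rosenberg index $\alpha(M)$ is identified with the equivariant coarse index of the lifted spin Dirac operator $\widetilde D$ on $\widetilde M$. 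It therefore suffices to exhibit a twist of $\widetilde D$ whose equivariant coarse index is non-zero.

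For each $k \in \N$, enlargeability supplies a Riemannian cover $p_k : \overline M_k \to M$ and a $(1/k)$-Lipschitz map $f_k : \overline M_k \to S^n$ of nonzero degree that is constant outside a compact set. Pulling back a fixed finite dimensional Hermitian bundle $V \to S^n$ with nonvanishing top Chern number gives a bundle $F_k \to \overline M_k$ of curvature norm $O(1/k^2)$ which is trivialised at infinity. Lifting along the intermediate covering $q_k : \widetilde M \to \overline M_k$ yields bundles $\widetilde F_k := q_k^* F_k \to \widetilde M$ with the same uniformly small curvature bound and controlled behaviour at infinity. Next I would adapt the infinite product construction of Proposition \ref{prod} to the coarse setting and assemble the sequence $(\widetilde F_k)$ into a single finitely generated Hilbert $A$-module bundle $\widetilde{\mathcal V} \to \widetilde M$, with $A = \prod_k A_k$ for suitable unital $C^*$-algebras $A_k$. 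Setting $A' := \bigoplus_k A_k$, $Q := A/A'$, and $\widetilde{\mathcal W} := \widetilde{\mathcal V} \otimes_A Q$, the uniform $O(1/k)$ curvature bound forces the holonomy of $\widetilde{\mathcal W}$ to be trivial at a fixed scale $\ell$, so $\widetilde{\mathcal W}$ is canonically a $\pi_1(M)$-equivariant locally trivial Hilbert $Q$-module bundle on $\widetilde M$.

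The coarse twist $\widetilde D \otimes \widetilde{\mathcal W}$ then represents a class in $K_n(C^*(\widetilde M)^{\pi_1(M)} \otimes Q)$, and running the diagram chase from the end of the proof of Theorem \ref{Kess} in this coarse setting — via the six-term sequence attached to $0 \to A' \to A \to Q \to 0$ tensored with $C^*(\widetilde M)^{\pi_1(M)}$, together with the identification $K_*(C^*(\widetilde M)^{\pi_1(M)} \otimes A') \cong \bigoplus_k K_*(C^*(\widetilde M)^{\pi_1(M)} \otimes A_k)$ — reduces non-vanishing of the twisted class to non-vanishing of each component. The $k$-th component is computed by a relative (or partitioned manifold) coarse index theorem: because $\widetilde F_k$ is trivialised at infinity, this relative index reduces to a classical Atiyah-Singer integer on $\overline M_k$ equal to $\deg(f_k)$ times the top Chern number of $V$, and hence is nonzero. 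The diagram chase then forces $\alpha(M) \neq 0$ in $K_n(C^*_{red}\pi_1(M))$. The main obstacle will be the construction of $\widetilde{\mathcal V}$: one must arrange that the transition data inherited from the various covers $\overline M_k$ combine into a bundle on $\widetilde M$ with uniformly finite propagation in the metric of $\widetilde M$, and that the assembled twisted Dirac operator is $\pi_1(M)$-equivariant and actually lies in the equivariant Roe algebra. This step is precisely the coarse-geometric substitute for the universal property argument that, unlike in the maximal case treated in Theorem \ref{Kess}, is unavailable for $C^*_{red}\pi_1(M)$.
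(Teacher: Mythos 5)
The paper does not prove Theorem \ref{coarse} in the text; it only cites \cite{HKRS(2007)}, noting that the argument exploits the connection of $C^*_{red}\pi_1(M)$ to coarse geometry. The gap you flag as ``the main obstacle'' — equivariance — is indeed fatal to the route you describe, not a technical loose end. The pulled-back bundle $\widetilde F_k = q_k^* F_k$ is equivariant only for the subgroup $H_k := \pi_1(\overline M_k) \leq \pi_1(M)$; for $\gamma \notin H_k$ the fibres of $\widetilde F_k$ over $x$ and over $\gamma x$ are pullbacks of different fibres of $F_k$ and admit no natural identification. Consequently the product bundle $\widetilde{\mathcal V}$ carries no $\pi_1(M)$-action at all, and the claim that $\widetilde{\mathcal W}$ is ``canonically a $\pi_1(M)$-equivariant locally trivial Hilbert $Q$-module bundle'' because its holonomy is trivial does not follow: on the simply connected $\widetilde M$, trivial holonomy just says $\widetilde{\mathcal W}$ is the trivial bundle $\widetilde M \times qQ$, and the only $\pi_1(M)$-equivariant structure it carries for free is the pullback of the trivial $Q$-bundle on $M$, which detects nothing. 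In Theorem \ref{Kess} the mechanism is precisely the opposite: the quotient bundle $W$ lives on the compact base $M$, and its flat connection records a nontrivial unitary holonomy representation $\pi_1(M) \to qQq$; it is that representation (not flatness per se) that produces the algebra map $C^*_{max}\pi_1(M) \to qQq$ and hence the detection.

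This gap cannot be expected to close along the same lines: the paper itself records, at the end of Section 6, as an open question whether Theorem \ref{Kess} holds for the reduced group $C^*$-algebra, and observes that it is unknown whether Theorem \ref{coarse} extends to area-enlargeable manifolds — exactly because the equivariant-Roe-algebra substitute for the universal property that you propose is not available. The cited proof in \cite{HKRS(2007)} therefore proceeds differently: it exploits the Lipschitz (distance-contracting, hence genuinely coarse) control of the maps $f_k$, maps the Rosenberg index into coarse $K$-theory attached to the covers $\overline M_k$, and applies a relative/partitioned coarse index argument there, rather than assembling a $\pi_1(M)$-equivariant infinite product Hilbert module bundle and passing through the quotient algebra $Q$.
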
 

We do not know whether the same conclusion holds for area-enlargeable spin manifolds. This 
would be implied by an affirmative answer to the following question.

\begin{quest} Does Theorem \ref{Kess} remain true for the reduced group $C^*$-algebra?
\end{quest}

\end{document}